\documentclass[final,5p,times]{elsarticle}

\usepackage{amsmath,amssymb,amsfonts}
\usepackage{amsthm}
\usepackage{mathtools}

\usepackage{graphicx}
\graphicspath{{Figures/}}

\usepackage{booktabs}
\usepackage{multirow}
\usepackage{float}
\usepackage{subfig}

\usepackage{algorithm}
\usepackage{algpseudocode}

\usepackage{tikz}
\usetikzlibrary{
calc,
arrows,
automata,
backgrounds,
petri,
decorations.pathmorphing,
shapes,
shapes.geometric,
positioning
}

\usepackage[nameinlink]{cleveref}
\usepackage{placeins}

\newtheorem{theorem}{Theorem}[section]
\newtheorem{proposition}[theorem]{Proposition}

\newtheorem{definition}[theorem]{Definition}
\newtheorem{remark}[theorem]{Remark}
\newtheorem{assumption}[theorem]{Assumption}

\journal{Aerospace Science and Technology}

\begin{document}


\begin{frontmatter}


\title{
Geometry-Consistent Bayesian Filtering under Structural Model Uncertainty:
A Geometric Projection Particle Filter
}


\author[ursc]{Surya Ratna Prakash D}
\ead{dsrp@ursc.gov.in}

\author[iisc]{Soumyendu Raha\corref{cor1}}
\ead{raha@iisc.ac.in}

\cortext[cor1]{Corresponding author.}


\affiliation[ursc]{
organization={U R Rao Satellite Centre, Indian Space Research Organisation (ISRO)},
city={Bengaluru},
postcode={560017},
country={India}
}

\affiliation[iisc]{
organization={Centre for Computational and Data Sciences, Indian Institute of Science},
city={Bengaluru},
postcode={560012},
country={India}
}

\begin{abstract}

Nonlinear state estimation under structural model uncertainty remains a fundamental challenge in autonomous Guidance, Navigation, and Control (GNC) systems. Conventional Bayesian filtering separates state propagation from measurement correction, allowing model mismatch to accumulate during propagation and leading to proposal--likelihood inconsistency, particle degeneracy, and degraded estimation accuracy. Existing approaches primarily improve proposal distributions or weighting strategies without explicitly enforcing compatibility between system dynamics and measurement geometry during state evolution.

This paper introduces a geometry-consistent Bayesian filtering framework that incorporates measurement geometry directly into the propagation process. The nominal drift is projected onto the measurement-consistent subspace, yielding a geometry-consistent proposal while preserving the Bayesian posterior through a rigorous change-of-measure formulation. A Geometric Projection Particle Filter (GPF) is developed together with a geometric co-state that quantifies instantaneous dynamics--measurement inconsistency and provides an intrinsic indicator of estimator integrity.

Theoretical analysis establishes existence and uniqueness of the projected dynamics, posterior preservation, standard Monte Carlo convergence of the particle approximation, and robustness under structural model uncertainty by showing that the filtering error is governed by the component of model mismatch orthogonal to the measurement-consistent subspace.

The framework is validated using lunar descent navigation under partial observability and persistent structural model uncertainty. Compared with the bootstrap particle filter and conventional Gaussian filtering methods, GPF consistently maintains higher effective sample size and lower estimation error across representative operating conditions. These results demonstrate that geometry-consistent propagation provides a principled, computationally efficient, and theoretically grounded framework for robust nonlinear Bayesian filtering.
\end{abstract}

\begin{highlights}

\item Geometry-consistent proposal improves nonlinear particle filtering.

\item Bayesian posterior preservation is established via change of measure.

\item Geometric co-state quantifies structural model mismatch and consistency.

\item Filtering error is governed by projected mismatch orthogonal to measurement geometry.

\item Lunar descent experiments demonstrate improved accuracy and reduced degeneracy.

\end{highlights}

\begin{keyword}

nonlinear filtering
\sep
particle filtering
\sep
geometric projection
\sep
change of measure
\sep
model uncertainty
\sep
estimator integrity
\sep
partial observability
\sep
autonomous navigation
\end{keyword}

\end{frontmatter}

\section{Introduction}

Reliable nonlinear state estimation is fundamental to autonomous Guidance, Navigation, and Control (GNC) systems operating under uncertain and dynamically evolving environments. Applications such as autonomous spacecraft navigation, planetary landing, aerial robotics, and autonomous vehicles require estimators that remain statistically consistent despite structural model uncertainty, partial observability, and stochastic disturbances. Although Bayesian filtering provides a rigorous probabilistic framework for nonlinear state estimation, maintaining estimator consistency under persistent model mismatch remains a longstanding challenge.

Modern Bayesian filtering is founded on two complementary operations: state propagation using an assumed dynamical model followed by measurement-based posterior correction. This prediction--correction paradigm underlies classical Gaussian filters, including the Kalman Filter (KF), Extended Kalman Filter (EKF), and Unscented Kalman Filter (UKF), as well as Sequential Monte Carlo (SMC) methods such as the bootstrap Particle Filter (PF) \cite{Kalman1960,KalmanBucy1961,Maybeck1979,CrassidisJunkins2012,Doucet2001,Arulampalam2002,Kong1994,DoucCappe2005,Snyder2008}. While highly successful under accurate system models, the separation between propagation and correction allows structural model mismatch to accumulate continuously before measurement information is incorporated. The resulting inconsistency manifests as biased innovations in Gaussian filters and proposal--likelihood mismatch, particle degeneracy, and degraded estimation accuracy in particle-based methods.

Extensive research has therefore focused on improving proposal distributions and mitigating particle degeneracy within the Sequential Monte Carlo framework. Representative developments include optimal and auxiliary proposal distributions \cite{PittShephard1999,DoucetGodsillAndrieu2000}, implicit sampling \cite{ChorinMorzfeldTu2010,ChorinTu2015}, particle-flow methods \cite{DaumHuang2011}, ensemble transport approaches \cite{Reich2013,BunchGodsill2016,VanLeeuwen2019}, feedback particle filters \cite{MehtaMeyn2012}, and recent learning-based and score-based proposal mechanisms \cite{song2021score,levy2017robust,procope2026forward,zhang2025variational}. Additional developments include UKF-generated proposals \cite{rebollo2024symmetry}, Neural Particle Filters \cite{fang2024full}, and Rao--Blackwellized particle filters with adaptive noise estimation \cite{badar2024rao}. Although these methods improve proposal quality, sampling efficiency, or weight variance, they generally preserve the conventional prediction--correction architecture and therefore do not explicitly incorporate measurement geometry into the propagation process itself.

This observation motivates a different approach. Rather than improving proposal distributions after state propagation, the proposed framework incorporates measurement geometry directly into the propagation process while preserving the Bayesian posterior. By modifying only the proposal dynamics, the framework mitigates the effects of structural model uncertainty before posterior correction.

The remainder of the paper develops the proposed framework systematically. Section~\ref{sec:framework} introduces the Geometry-Consistent Filtering Principle. Section~\ref{sec:GPD} develops the continuous-time geometric formulation and derives the projected dynamics. Section~\ref{sec:bayesian} establishes posterior preservation through a change-of-measure formulation. Sections~\ref{sec:SGPF} and~6 derive the Geometric Projection Particle Filter (GPF) and its algorithmic realization, followed by experimental validation using autonomous lunar descent navigation under structural model uncertainty.
\section{Geometry-Consistent Bayesian Filtering Framework}
\label{sec:framework}

This section establishes the proposed Geometry-Consistent Bayesian Filtering Framework. The central idea is to incorporate measurement geometry directly into state propagation while preserving the Bayesian posterior. Consequently, Geometric compatibility governs proposal construction, whereas Bayesian inference governs posterior estimation, as formalized by the following principle.

\begin{definition}[Geometry-Consistent Filtering Principle]
A nonlinear Bayesian filter satisfies the \emph{Geometry-Consistent Filtering Principle} if the propagated state evolution remains locally compatible with the instantaneous measurement geometry while preserving the Bayesian posterior distribution. State propagation is therefore constrained by geometric compatibility, whereas posterior inference remains governed by Bayesian probability.
\end{definition}

The framework comprises four complementary components:

\begin{enumerate}

\item \textbf{Geometric Compatibility.}
The local relationship between the system dynamics and the measurement geometry defines the measurement-consistent tangent subspace.

\item \textbf{Geometric Evolution.}
The nominal dynamics are projected onto the measurement-consistent subspace, yielding a geometry-consistent proposal process.

\item \textbf{Probabilistic Consistency.}
Posterior preservation is established through a rigorous change-of-measure formulation.

\item \textbf{Computational Realization.}
The Geometric Projection Particle Filter (GPF) provides a Sequential Monte Carlo implementation of the proposed framework.

\end{enumerate}

Figure~\ref{fig:framework} summarizes the overall architecture and the interaction between geometric evolution, probabilistic consistency, and computational realization.

\begin{figure*}[t]
\centering
\includegraphics[width=0.85\textwidth]{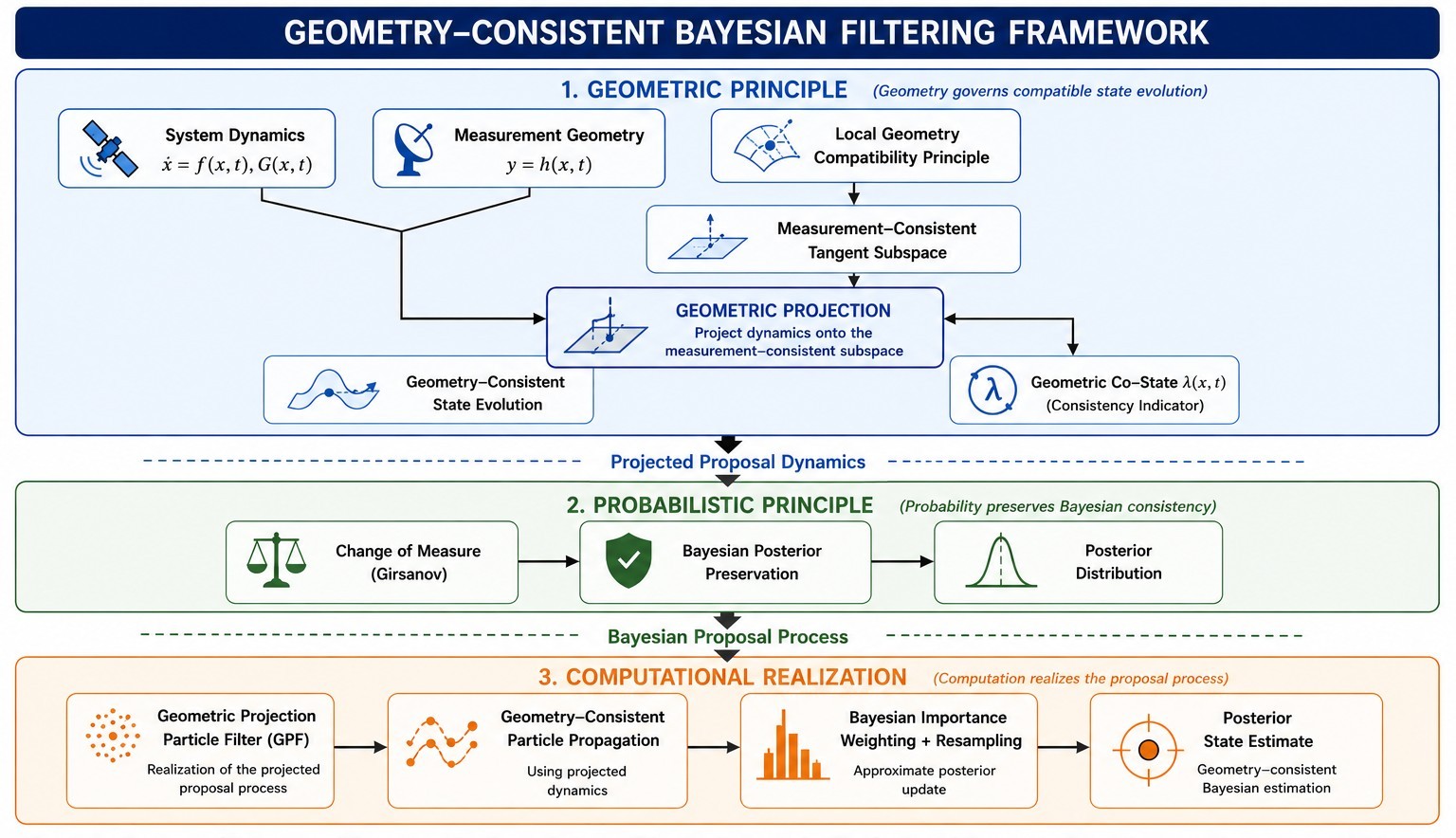}
\caption{
Architecture of the proposed Geometry--Consistent Bayesian Filtering Framework. Geometric evolution enforces measurement--dynamics compatibility through projected state propagation and the geometric co-state. Probabilistic consistency preserves the Bayesian posterior via a change-of-measure formulation. Computational realization implements the resulting proposal through the Geometric Projection Particle Filter (GPF).
}
\label{fig:framework}
\end{figure*}

The mathematical development proceeds in four stages. First, a continuous-time stochastic filtering model is established. Second, a local compatibility principle is introduced to characterize admissible state evolution. Third, the projected dynamics and the associated geometric co-state are derived. Finally, standard regularity assumptions establish the well-posedness of the resulting proposal process.

\section{Continuous-Time Geometric Framework}
\label{sec:GPD}
The framework is now formulated in continuous time. This section develops the geometric compatibility principle, projected dynamics, geometric co-state, and the corresponding well-posedness results.

\subsection{Continuous-Time Filtering Model}
\label{sec:filtering_model}

The Geometry--Consistent Bayesian Filtering Framework is formulated for nonlinear stochastic dynamical systems evolving in continuous time. Let
\[
(\Omega,\mathcal{F},\{\mathcal{F}_t\}_{t\ge0},\mathbb{P})
\]
be a filtered probability space satisfying the usual completeness and right-continuity conditions, where $\{\mathcal{F}_t\}$ denotes the natural filtration generated by the initial condition together with the driving Wiener processes.

The assumed system dynamics evolve according to the It\^o stochastic differential equation
\begin{equation}
dx_t
=
f(x_t,t)\,dt
+
G(x_t,t)\,dB_t,
\label{eq:state}
\end{equation}
where $x_t\in\mathbb{R}^{n}$ denotes the system state, $f:\mathbb{R}^{n}\times\mathbb{R}_{+}\rightarrow\mathbb{R}^{n}$ is the nominal drift field, $G:\mathbb{R}^{n}\times\mathbb{R}_{+}\rightarrow\mathbb{R}^{n\times r}$ is the diffusion matrix, and $B_t$ is an $r$-dimensional standard Wiener process.

The observation process is modeled by
\begin{equation}
y_t = h(x_t,t) + W_t,
\label{eq:obs}
\end{equation}
where $y_t\in\mathbb{R}^{m}$ denotes the measurement vector,
$h:\mathbb{R}^{n}\times\mathbb{R}_{+}\rightarrow\mathbb{R}^{m}$ is the observation map,
and $W_t$ is an $m$-dimensional Wiener process independent of $B_t$.
The initial state $x_0$ is assumed to be $\mathcal{F}_0$-measurable with a known prior distribution.

Equations~\eqref{eq:state} and~\eqref{eq:obs} define the nominal filtering model. The true system may deviate from these dynamics through structural model uncertainty. The following development constructs a geometry-consistent proposal process that preserves the Bayesian posterior while remaining locally compatible with the measurement geometry.

The subsequent development therefore focuses on the geometric structure of the filtering problem. In particular, the measurement Jacobian induces a local measurement-consistent tangent subspace that characterizes admissible state evolution. This geometric structure forms the basis for deriving the compatibility principle, geometric co-state, and projected dynamics introduced in the following subsections.

\subsection{Geometry Compatibility Principle}
\label{sec:compatibility}
The proposed framework enforces local compatibility between state evolution and the instantaneous measurement geometry by incorporating measurement information directly into the propagation process. Let $H_t := \frac{\partial h}{\partial x}(x_t,t)$ denote the measurement Jacobian evaluated along the system trajectory. The Jacobian defines the local tangent map from the state space to the observation space and therefore characterizes the instantaneous measurement geometry.

Applying a first-order differential approximation to the observation model yields

\begin{equation}
h(x_t+dx_t,t+dt)
= h(x_t,t) + H_t\,dx_t
+ \partial_t h(x_t,t)\,dt.
\end{equation}
where higher-order It\^o terms are neglected within the present first-order local compatibility approximation and are subsequently represented through the stochastic diffusion process.
The corresponding observation increment satisfies
\begin{equation}
dy_t
= H_t\,dx_t + \partial_t h(x_t,t)\,dt + dW_t.
\label{eq:obs_increment}
\end{equation}

Equation~\eqref{eq:obs_increment} establishes the differential relationship between state evolution and measurement evolution.

This motivates the following compatibility principle.

\begin{definition}[Geometry Compatibility Principle]
State evolution is said to be \emph{locally geometry-consistent} if the propagated state increment satisfies
\begin{equation}
H_t\,dx_t
+ \partial_t h(x_t,t)\,dt
= dy_t - dW_t.
\label{eq:compatibility}
\end{equation}
\end{definition}

Equation~\eqref{eq:compatibility} defines a local geometric constraint that characterizes admissible state evolution within the measurement-consistent tangent space. This compatibility relation forms the basis for the projected dynamics developed below.
\subsection{Reference Innovation}
\label{sec:innovation}

The compatibility principle introduced in the previous subsection establishes the local geometric relationship between state evolution and measurement evolution. To quantify deviations from this ideal compatibility, we introduce a reference innovation process that represents the measurement increment predicted by the nominal dynamics.

The deterministic component of the measurement evolution is defined as
\begin{equation}
\eta_t
:=
H_t f(x_t,t)
+
\partial_t h(x_t,t),
\label{eq:etadef}
\end{equation}
where the first term represents the contribution of the nominal system dynamics projected into the measurement space, while the second term accounts for the explicit time dependence of the observation map.

Using this quantity, the reference innovation increment is defined by
\begin{equation}
d\nu_t
:=
dy_t
-
\eta_t\,dt .
\label{eq:innovation}
\end{equation}

\begin{definition}[Reference Innovation]
The stochastic process
\[
d\nu_t
=
dy_t
-
\eta_t\,dt
\]
is called the \emph{reference innovation increment}. It represents the discrepancy between the observed measurement increment and the increment predicted by the nominal continuous-time dynamics.
\end{definition}

Substituting the observation model into \eqref{eq:innovation} gives $d\nu_t = dW_t$ under correct model specification. Consequently, the reference innovation reduces to the measurement noise increment and therefore constitutes a martingale with respect to the observation filtration.

Under structural model uncertainty, however, the deterministic component of the innovation generally deviates from zero. The resulting innovation therefore captures the instantaneous disagreement between the nominal dynamics and the measurement geometry, providing the fundamental quantity from which the proposed geometric correction is constructed. Unlike the classical innovation, the reference innovation governs proposal construction rather than posterior correction.
\subsection{Geometric Projection Dynamics}
\label{sec:projection}

The geometry compatibility principle established in the previous subsection defines the class of admissible state evolutions that remain locally consistent with the instantaneous measurement geometry. We now derive the continuous-time proposal dynamics satisfying this principle.

The compatibility condition
\begin{equation}
H_t\,dx_t
+
\partial_t h(x_t,t)\,dt
=
dy_t
-
dW_t
\label{eq:compatibility_repeat}
\end{equation}
generally cannot be satisfied by the nominal dynamics $dx_t=f(x_t,t)\,dt,$
since the nominal drift need not evolve within the measurement-consistent tangent subspace. Consequently, an additional correction is required to enforce local geometric compatibility.

To this end, we augment the nominal drift by introducing a correction term lying in the range of the measurement Jacobian,
\begin{equation}
dx_t
=
f(x_t,t)\,dt
+
H_t^{\top}\lambda_t\,dt,
\label{eq:modified_drift}
\end{equation}
where $\lambda_t\in\mathbb{R}^{m}$ is a geometric co-state that enforces the compatibility condition. Since $H_t^{\top}\lambda_t$ belongs to the measurement-consistent tangent space, the correction modifies only those components of the state evolution that directly influence the measurements.

Substituting
\eqref{eq:modified_drift}
into the compatibility relation
\eqref{eq:compatibility_repeat}
yields
\begin{equation}
H_tH_t^{\top}\lambda_t\,dt
=
dy_t
-
H_tf(x_t,t)\,dt
-
\partial_t h(x_t,t)\,dt
-
dW_t.
\label{eq:costate_system}
\end{equation}

Assuming the regularized Moore--Penrose pseudoinverse exists,
the corresponding co-state is
\begin{equation}
\lambda_t\,dt
=
(H_tH_t^{\top})^{\dagger}
\Big(
dy_t
-
H_tf(x_t,t)\,dt
-
\partial_t h(x_t,t)\,dt
-
dW_t
\Big).
\label{eq:lambda}
\end{equation}

Substituting
\eqref{eq:lambda}
into
\eqref{eq:modified_drift}
eliminates the auxiliary variable.

Under the first-order compatibility approximation established in
Section~\ref{sec:compatibility}, the resulting geometry-consistent proposal process is characterized by the following theorem.

\begin{theorem}[Geometry-Consistent Projection]
\label{thm:projection}
The geometry-consistent proposal process satisfying the Geometry Compatibility Principle is governed by the projected stochastic dynamics
\begin{equation}
\begin{aligned}
dx_t
&=
\Big(
I
-
H_t^{\top}
(H_tH_t^{\top})^{\dagger}
H_t
\Big)
f(x_t,t)\,dt
\\
&\quad
+
H_t^{\top}
(H_tH_t^{\top})^{\dagger}
\Big(
dy_t
-
\partial_t h(x_t,t)\,dt
-
dW_t
\Big).
\end{aligned}
\label{eq:proj}
\end{equation}
\end{theorem}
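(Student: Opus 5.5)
The proof is a direct substitution: insert the co-state \eqref{eq:lambda} into the augmented drift \eqref{eq:modified_drift}, then collect terms to isolate the projector acting on $f$. Write $P_t := H_t^{\top}(H_tH_t^{\top})^{\dagger}H_t$. Before substituting, I will record two algebraic facts about the Moore--Penrose pseudoinverse:
\[
(H_tH_t^{\top})^{\dagger} = (H_t^{\dagger})^{\top}H_t^{\dagger}
\quad\text{and hence}\quad
P_t = H_t^{\dagger}H_t .
\]
It follows that $P_t$ is the orthogonal projector onto $\operatorname{range}(H_t^{\top})$, and $I-P_t$ is the orthogonal projector onto $\ker H_t$. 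This identification matters for two reasons. It justifies calling \eqref{eq:proj} a projection of the drift. It also makes the statement meaningful when $H_t$ is rank deficient, where $(H_tH_t^{\top})^{-1}$ does not exist.

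The substitution itself proceeds as follows. Inserting \eqref{eq:lambda} into \eqref{eq:modified_drift} gives
\[
dx_t = f\,dt + H_t^{\top}(H_tH_t^{\top})^{\dagger}\bigl(dy_t - \partial_t h\,dt - dW_t\bigr) - H_t^{\top}(H_tH_t^{\top})^{\dagger}H_t f\,dt .
\]
Grouping the two $f$ terms produces $(I-P_t)f\,dt$, which is exactly \eqref{eq:proj}.

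Next I will verify that this process satisfies the Geometry Compatibility Principle \eqref{eq:compatibility}, interpreted in the first-order sense of Section~\ref{sec:compatibility}. Apply $H_t$ to \eqref{eq:proj}:

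\begin{itemize}
\item The drift term vanishes, since $H_t(I-P_t)=H_t-H_tH_t^{\dagger}H_t=0$.
\item The second term becomes $H_tH_t^{\top}(H_tH_t^{\top})^{\dagger}\,(dy_t-\partial_t h\,dt-dW_t)$. Here $H_tH_t^{\top}(H_tH_t^{\top})^{\dagger}$ is the orthogonal projector $Q_t$ onto $\operatorname{range}(H_t)$.
\end{itemize}

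So $H_t\,dx_t+\partial_t h\,dt = dy_t-dW_t$ holds exactly when $H_t$ has full row rank, in which case $Q_t=I$. In general it holds after projection onto $\operatorname{range}(H_t)$, which is the best achievable since $H_t\,dx_t$ always lies in that subspace. I will state this as the precise sense of the theorem.

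Finally, I will argue that the correction is minimal, so that "the" geometry-consistent proposal is well defined. Among all increments of the form $f\,dt+\delta$ satisfying the (projected) constraint, the choice $\delta=H_t^{\top}\lambda_t\,dt$ is the minimum-norm solution: any other solution differs from it by an element of $\ker H_t$, which is orthogonal to $\operatorname{range}(H_t^{\top})$.

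The main obstacle is interpretive rather than computational. The quantity $dW_t$ is not observable, and the manipulation treats stochastic differentials as algebraic objects. I will state explicitly that \eqref{eq:proj} is understood as an It\^o SDE driven by the semimartingale $y_t$ and by $W_t$, under the paper's first-order approximation that neglects second-order It\^o terms in $h$. Well-posedness of this SDE is deferred to the regularity assumptions that follow.
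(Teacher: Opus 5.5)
Your proposal is correct and follows the paper's own route: augment the drift by $H_t^{\top}\lambda_t\,dt$, solve the compatibility relation for $\lambda_t$ via the pseudoinverse as in \eqref{eq:lambda}, substitute back into \eqref{eq:modified_drift}, and group the two $f$ terms into $(I-P_{\parallel})f\,dt$. Your additional points go beyond what the paper states and are all correct: $P_{\parallel}=H_t^{\dagger}H_t$ is the orthogonal projector onto $\operatorname{range}(H_t^{\top})$; the resulting process satisfies \eqref{eq:compatibility} exactly only when $H_t$ has full row rank, and otherwise only after projection onto $\operatorname{range}(H_t)$; and $H_t^{\top}\lambda_t\,dt$ is the minimum-norm correction.
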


The projected dynamics admit the orthogonal decomposition.
Define
\begin{equation}
P_{\parallel}
=
H_t^{\top}
(H_tH_t^{\top})^{\dagger}
H_t,
\qquad
P_{\perp}
=
I-P_{\parallel},
\label{eq:projectors}
\end{equation}
where
$P_{\parallel}$
projects onto the measurement-consistent tangent subspace and
$P_{\perp}$
projects onto its orthogonal complement.

The projected dynamics can therefore be written compactly as
\begin{equation}
dx_t
=
P_{\perp}f(x_t,t)\,dt
+
P_{\parallel}
\Big(
dy_t
-
\partial_t h(x_t,t)\,dt
-
dW_t
\Big).
\label{eq:compact_proj}
\end{equation}

Equation~\eqref{eq:compact_proj} preserves the measurement-compatible component of the nominal dynamics while replacing the incompatible component through geometric projection.

\begin{remark}[Geometric Interpretation]
The projector
$P_{\parallel}$
maps the nominal dynamics onto the tangent space induced by the measurement Jacobian, whereas
$P_{\perp}$
removes the component orthogonal to this space.
Accordingly, the proposed dynamics evolve within the measurement-consistent tangent subspace while preserving the stochastic structure required for Bayesian filtering.
\end{remark}

\begin{remark}[Projection Scope]
The proposed projection modifies only the deterministic drift component of the proposal process, while the diffusion term remains unchanged. Consequently, the projected dynamics are interpreted as a proposal process adapted to the observation filtration. The subsequent change-of-measure formulation establishes probabilistic equivalence with the original filtering problem and preserves the Bayesian posterior.
\end{remark}

\begin{remark}[Numerical Implementation]
Throughout this work,
the Moore--Penrose pseudoinverse is evaluated using the Tikhonov-regularized approximation $(H_tH_t^{\top}+\epsilon I)^{-1},$ where $\epsilon>0$
ensures numerical robustness under rank-deficient or ill-conditioned measurement configurations.
\end{remark}

\subsection{Geometric Co-State}
\label{sec:costate}
The projected dynamics are generated through the auxiliary variable
$\lambda_t$, which arises naturally from the Geometry Compatibility Principle. The geometric co-state quantifies the instantaneous discrepancy between the nominal system dynamics and the local measurement geometry, thereby providing an intrinsic measure of dynamics--measurement inconsistency.

From~\eqref{eq:lambda}, the geometric co-state is given by
\begin{equation}
\lambda_t\,dt
= (H_tH_t^\top)^\dagger
\left( dy_t - H_tf(x_t,t)\,dt
- \partial_t h(x_t,t)\,dt - dW_t \right).
\label{eq:costate}
\end{equation}

Equation~\eqref{eq:costate} shows that
$\lambda_t$
is proportional to the component of the measurement increment that cannot be explained by the nominal continuous-time dynamics. Unlike the classical innovation, which is used for posterior correction, the geometric co-state characterizes proposal consistency before Bayesian weighting and therefore serves as an intrinsic indicator of estimator consistency.

The following result establishes the nominal behavior of the geometric co-state.
\begin{theorem}[Nominal Consistency of the Geometric Co-State]
Assume that the assumed system dynamics coincide with the true dynamics and that the observation model is correctly specified. Then
\[
\mathbb{E}
\!\left[
dy_t
\mid
\mathcal{F}_t^y
\right]
=
\eta(x_t,t)\,dt,
\]
which implies $ \mathbb{E} \!\left[
\lambda_t
\mid
\mathcal{F}_t^y
\right]
=
0.
$
\label{thm:costate_zero}
\end{theorem}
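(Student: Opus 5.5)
The plan is to compute the conditional expectation of the measurement increment directly from the observation model and then push it through the co-state formula \eqref{eq:costate}, interpreting $\lambda_t$ as a linear functional of the increments. Under correct specification, the true state obeys \eqref{eq:state} and the observation increment obeys \eqref{eq:obs_increment}. Substituting \eqref{eq:state} into \eqref{eq:obs_increment}, within the same first-order compatibility approximation used in Section~\ref{sec:compatibility}, gives
\begin{equation*}
dy_t = \big(H_t f(x_t,t) + \partial_t h(x_t,t)\big)dt + H_t G(x_t,t)\,dB_t + dW_t
= \eta(x_t,t)\,dt + H_t G(x_t,t)\,dB_t + dW_t ,
\end{equation*}
using the definition \eqref{eq:etadef} of $\eta_t$.

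The first step is to take conditional expectations given $\mathcal{F}^y_t$ (interpreted, as in the statement, with $x_t$ treated as fixed along the trajectory, i.e.\ conditioning on the information at time $t$). The increments $dB_t$ and $dW_t$ are forward increments, independent of $\mathcal{F}_t$ and hence of $\mathcal{F}^y_t\subseteq\mathcal{F}_t$. The coefficients $H_tG(x_t,t)$ are $\mathcal{F}_t$-measurable. Hence $\mathbb{E}[H_tG\,dB_t\mid\mathcal{F}_t]=0$ and $\mathbb{E}[dW_t\mid\mathcal{F}_t]=0$, and the tower property passes this to $\mathcal{F}^y_t$. This yields $\mathbb{E}[dy_t\mid\mathcal{F}^y_t]=\eta(x_t,t)\,dt$, equivalently $\mathbb{E}[d\nu_t\mid\mathcal{F}^y_t]=0$. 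This matches the remark after the reference innovation definition that $d\nu_t$ is a martingale increment.

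The second step inserts this into \eqref{eq:costate}:
\begin{equation*}
\lambda_t\,dt = (H_tH_t^\top)^\dagger\big(dy_t - \eta_t\,dt - dW_t\big) = (H_tH_t^\top)^\dagger\big(d\nu_t - dW_t\big).
\end{equation*}
The prefactor $(H_tH_t^\top)^\dagger$, or its regularized version $(H_tH_t^\top+\epsilon I)^{-1}$, is $\mathcal{F}_t$-measurable, so it can be pulled out of the conditional expectation. Both $d\nu_t$ and $dW_t$ have zero conditional mean, so $\mathbb{E}[\lambda_t\,dt\mid\mathcal{F}^y_t]=0$. Dividing by $dt$, read as the drift density of the co-state increment, gives $\mathbb{E}[\lambda_t\mid\mathcal{F}^y_t]=0$.

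The main obstacle is rigor in the conditioning. First, $x_t$ is not $\mathcal{F}^y_t$-measurable, so $\eta(x_t,t)$ is strictly a conditional mean, $\mathbb{E}[\eta(x_t,t)\mid\mathcal{F}^y_t]$. Second, $\lambda_t$ is defined only through increments. I would handle the first point by conditioning first on $\mathcal{F}_t$ and stating the result in that pathwise sense, which is how the statement writes $\eta(x_t,t)$. I would handle the second by interpreting $\lambda_t$ as the drift of the process $\int(H_sH_s^\top)^\dagger(d\nu_s-dW_s)$. A square-integrability assumption on $(H_sH_s^\top)^\dagger H_sG$ and $(H_sH_s^\top)^\dagger$, guaranteed by the regularization $\epsilon>0$ and bounded Jacobians, then makes the stochastic integrals true martingales, so their conditional drift vanishes.
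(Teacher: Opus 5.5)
Your argument is essentially the paper's: substitute the correctly specified observation increment into \eqref{eq:costate}, observe that only a zero-mean noise increment survives, and conclude $\mathbb{E}[\lambda_t\mid\mathcal{F}^y_t]=0$. The paper does this more loosely: it takes $dy_t=\eta(x_t,t)\,dt+dW_t$, so it never sees your $H_tG\,dB_t$ term; it writes $\lambda_t\,dt=(H_tH_t^\top)^\dagger dW_t$; and it conditions directly on $\mathcal{F}^y_t$. Your detour through $\mathcal{F}_t$ and the tower property is what legitimately removes the $x_t$-dependent prefactor. You are also right that the first identity holds as written only in that $\mathcal{F}_t$-conditional sense; given only $\mathcal{F}^y_t$, the right-hand side must be $\mathbb{E}[\eta(x_t,t)\mid\mathcal{F}^y_t]\,dt$.
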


\begin{proof}

Under correct model specification,

\[
dy_t
=
\eta(x_t,t)\,dt
+
dW_t,
\]

where
$dW_t$
is a martingale increment with respect to the observation filtration.
Substituting this expression into
\eqref{eq:costate}
yields

\[
\lambda_t\,dt
=
(H_tH_t^\top)^\dagger dW_t.
\]

Taking conditional expectation with respect to
$\mathcal{F}_t^y$
and using

\[
\mathbb{E}[dW_t|\mathcal{F}_t^y]=0
\]

gives

\[
\mathbb{E}
[
\lambda_t
|
\mathcal{F}_t^y
]
=
0,
\]

which proves the result.

\end{proof}

Theorem~\ref{thm:costate_zero}
establishes that the geometric co-state is unbiased under correct model specification. Consequently, persistent non-zero values of
$\lambda_t$
indicate structural disagreement between the assumed dynamics and the measurement geometry.

This property enables the geometric co-state to serve as an intrinsic indicator of estimator consistency. Large co-state magnitudes correspond to increased dynamics--measurement incompatibility and therefore identify operating regimes in which structural model mismatch is expected to degrade filtering performance.

\begin{remark}[Discrete-Time Approximation]

For numerical implementation,
the continuous-time co-state admits the Euler approximation

\begin{equation}
\begin{aligned}
\lambda_k\,\Delta t
\approx
(H_kH_k^\top)^\dagger
\Big(
&
\Delta y_k
-
H_kf(x_k,t_k)\Delta t
\\
&
-
\partial_t h(x_k,t_k)\Delta t
-
\Delta W_k
\Big),
\end{aligned}
\label{eq:discrete_costate}
\end{equation}

which converges to the continuous-time expression as
$\Delta t\rightarrow0$.
The same quantity is computed for every particle within the proposed Geometric Projection Particle Filter and is subsequently used as a diagnostic indicator of geometric consistency.
\end{remark}

\subsection{Regularity Assumptions and Well-Posedness}
\label{sec:wellposed}

The proposed framework is analyzed under standard regularity assumptions for nonlinear stochastic filtering, ensuring that both the nominal dynamics and the projected proposal process are mathematically well defined.

\begin{assumption}[Dynamics Regularity]
\label{ass:regularity}
The functions $f:\mathbb{R}^{n}\times\mathbb{R}_{+}\rightarrow\mathbb{R}^{n},
G:\mathbb{R}^{n}\times\mathbb{R}_{+}\rightarrow\mathbb{R}^{n\times r},
$ and $ h:\mathbb{R}^{n}\times\mathbb{R}_{+}\rightarrow\mathbb{R}^{m}$ are continuously differentiable with respect to the state variable, jointly locally Lipschitz in $(x,t)$, and satisfy linear growth conditions.
\end{assumption}

\begin{assumption}[Measurement Jacobian]
\label{ass:jacobian}
The measurement Jacobian is progressively measurable and locally bounded on every compact time interval.
\end{assumption}

\begin{assumption}[Regularity of the Projection Operator]
\label{ass:projection}
The matrix $H_tH_t^{\top}$ admits a bounded regularized Moore--Penrose pseudoinverse $(H_tH_t^{\top})^{\dagger},$ which varies locally Lipschitz continuously with respect to the state.
\end{assumption}

These assumptions are standard in continuous-time nonlinear filtering and ensure that the measurement-consistent tangent subspace and its associated projection operator remain well defined throughout the evolution.

\begin{remark}[Partial Observability]
The formulation naturally accommodates partial and time-varying observability. When the measurement Jacobian is full row rank, the pseudoinverse reduces to the ordinary inverse; otherwise, the regularized Moore--Penrose pseudoinverse provides a numerically stable projection onto the measurement-consistent tangent subspace.
\end{remark}

\begin{theorem}[Existence and Uniqueness of the Geometry-Consistent Proposal]
\label{thm:existence}

Suppose Assumptions
\ref{ass:regularity}--\ref{ass:projection}
hold.

Then the projected stochastic differential equation
\eqref{eq:proj}
admits a unique strong solution on every finite time interval
$[0,T]$.

\end{theorem}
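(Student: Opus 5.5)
The plan is to rewrite \eqref{eq:proj} as a standard It\^o SDE driven by known semimartingales and then apply the classical Itô existence and uniqueness theorem, together with a localization argument. We use the observation model \eqref{eq:obs} to write $dy_t = h(x^{\mathrm{true}}_t,t)\,dt$-type increments. More precisely, we treat $y$ as a given continuous semimartingale adapted to the filtration, with decomposition $dy_t = a_t\,dt + dW_t$, where $a_t$ is progressively measurable and locally square integrable. Then $dy_t - dW_t = a_t\,dt$, and \eqref{eq:proj} becomes
\begin{equation*}
dx_t = b(x_t,t,\omega)\,dt + \sigma(x_t,t)\,dB_t,
\end{equation*}
with drift
\begin{equation*}
b(x,t) = P_{\perp}(x,t) f(x,t) + H(x,t)^{\top}\big(H(x,t)H(x,t)^{\top}\big)^{\dagger}\big(a_t - \partial_t h(x,t)\big).
\end{equation*}
The diffusion term is kept as $G(x,t)\,dB_t$, consistent with the Projection Scope remark that the diffusion is unchanged. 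If one instead retains $dy_t$ as a driving term with a $dW_t$ component, the coefficient multiplying the driver has the same structure, and the argument below applies verbatim to a multi-driver SDE.

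\textbf{Step 1 (local Lipschitz coefficients).} By Assumption~\ref{ass:regularity}, $f$, $G$, $h$ are $C^1$ in $x$ and locally Lipschitz, so $H = \partial_x h$ is continuous. We additionally need $H$ itself to be locally Lipschitz, which we take from Assumption~\ref{ass:jacobian} combined with $C^1$ regularity (strengthening to $C^{1,1}$ if necessary). Assumption~\ref{ass:projection} gives that $(HH^{\top})^{\dagger}$ is bounded and locally Lipschitz. Since products and sums of locally bounded, locally Lipschitz maps are locally Lipschitz, the maps $P_{\parallel}$, $P_{\perp} = I - P_{\parallel}$, and hence $b$ and $G$, are locally Lipschitz in $x$ on every compact set, uniformly for $t \in [0,T]$. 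We must also check $\partial_t h$; we would read the joint Lipschitz hypothesis as covering it, or add this as a mild assumption.

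\textbf{Step 2 (localization).} For $R>0$, let $\tau_R$ be the exit time of $x$ from the ball $\{|x|\le R\}$. Truncate the coefficients outside this ball to obtain globally Lipschitz ones. The standard Picard iteration, or the strong existence theorem for SDEs with random progressively measurable coefficients, then gives a unique strong solution $x^R$. Pathwise uniqueness of the truncated problems implies that $x^R = x^{R'}$ on $[0,\tau_R]$ for $R<R'$. This consistency lets us define a maximal solution up to the explosion time $\tau_\infty = \lim_R \tau_R$, and uniqueness follows by a Gronwall argument on $[0,\tau_R]$.

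\textbf{Step 3 (non-explosion), the main obstacle.} Here we show $\tau_\infty > T$ almost surely. The difficulty is a linear growth bound for $b$. First, $P_{\perp}$ is an orthogonal projector, so $\|P_{\perp}\| \le 1$ and $|P_{\perp}f| \le |f| \le C(1+|x|)$. Second, the term $H^{\top}(HH^{\top})^{\dagger}$ is controlled by noting that $\|H^{\top}(HH^{\top}+\epsilon I)^{-1}\| \le 1/(2\sqrt{\epsilon})$ in the regularized setting, and by Assumption~\ref{ass:projection} in general. Third, $\partial_t h$ grows at most linearly. Combining these gives
\begin{equation*}
|b(x,t)| \le C(1+|x|) + C|a_t|.
\end{equation*}
We then apply It\^o's formula to $|x_{t\wedge\tau_R}|^2$, use the linear growth of $G$, and apply Gronwall's inequality. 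Together with $\mathbb{E}\int_0^T |a_t|^2\,dt < \infty$, this yields a bound on $\mathbb{E}\sup_{t\le T\wedge\tau_R}|x_t|^2$ that is uniform in $R$. Consequently $\mathbb{P}(\tau_R \le T)\to 0$ as $R\to\infty$, which gives global existence on $[0,T]$ and completes the proof.
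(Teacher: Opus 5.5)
Your proposal takes the same route as the paper: check that the projected drift is locally Lipschitz with linear growth, then invoke classical strong existence and uniqueness for It\^o SDEs, with the localization and non-explosion steps written out instead of cited and the $dy_t-dW_t$ term handled explicitly as a driver. The extra hypotheses you flag (local Lipschitz continuity of $H$, regularity and growth of $\partial_t h$, integrability of the drift of $y_t-W_t$) are genuinely needed, because the paper's claim that Assumptions~\ref{ass:jacobian}--\ref{ass:projection} make $P_{\parallel}$ locally Lipschitz does not follow from local boundedness of $H$ alone, so your version is the more careful one.
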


\begin{proof}

Under Assumption~\ref{ass:regularity},
the nominal drift
$f(x,t)$
and diffusion matrix
$G(x,t)$
are locally Lipschitz and satisfy linear growth conditions.

Assumptions
\ref{ass:jacobian}
and
\ref{ass:projection}
imply that the projection operator

\[
P_{\parallel}
=
H_t^{\top}
(H_tH_t^{\top})^{\dagger}
H_t
\]

is locally Lipschitz with respect to the state.

Consequently, the projected drift appearing in
\eqref{eq:proj}
inherits the same regularity properties as the nominal drift.

Since the diffusion term remains unchanged,$G(x_t,t)dB_t,$ the projected dynamics satisfy the standard existence and uniqueness conditions for It\^o stochastic differential equations. Therefore, classical SDE theory guarantees the existence of a unique strong solution on every finite time interval.
\end{proof}

Theorem~\ref{thm:existence}
establishes that the proposed geometric projection defines a mathematically well-posed continuous-time proposal process. Consequently, the geometry-consistent evolution introduced in this section provides a rigorous stochastic foundation for the Bayesian change-of-measure formulation developed in the following section.

The developments in this section establish the deterministic geometric structure underlying the proposed framework. The subsequent section shows that this geometry-consistent proposal preserves the Bayesian posterior through a rigorous change-of-measure formulation, thereby linking geometric consistency with probabilistic consistency.

\section{Bayesian Consistency of Geometry--Consistent Filtering}
\label{sec:bayesian}
The previous section derived a geometry-consistent proposal process whose state evolution remains locally compatible with the instantaneous measurement geometry. The remaining question is whether this modified proposal preserves the Bayesian posterior distribution. This section answers that question by establishing a change-of-measure formulation, deriving posterior preservation through the Kallianpur--Striebel formula, and developing the corresponding particle approximation \cite{xiong2008}.
\subsection{Change of Measure and Posterior Preservation}
\label{sec:change_measure}
The continuous-time geometric framework developed in Section~\ref{sec:GPD} establishes a geometry-consistent proposal process whose state evolution remains locally compatible with the instantaneous measurement geometry. Although this modified propagation improves geometric consistency, it does not by itself guarantee preservation of the Bayesian posterior distribution. It is therefore necessary to establish that the proposed geometry-consistent evolution remains probabilistically equivalent to the original nonlinear filtering problem.

The proposed framework modifies only the proposal dynamics while leaving the physical observation model unchanged. Consequently, the geometry-consistent proposal process remains adapted to the observation filtration
$\mathcal{F}_t^y$
and is therefore non-anticipative. This permits the construction of an equivalent probability measure relating geometry-consistent proposal process to the physical filtering process through Girsanov's theorem.

Let
$\mathbb{P}$
denote the physical probability measure associated with the original nonlinear filtering problem, under which the observation process satisfies~\eqref{eq:obs}. Define the likelihood (Radon--Nikodym) process

\begin{equation}
\mathcal{L}_t
=
\exp\!\left(
\int_0^t
\eta(x_s,s)^\top
dy_s
-
\frac12
\int_0^t
\|\eta(x_s,s)\|^2
ds
\right),
\label{eq:RN}
\end{equation}

where $\eta(x,t)$ is the deterministic measurement drift introduced in~\eqref{eq:etadef}. Since $\eta(x,t)$ coincides with the deterministic component of the reference innovation process defined in~\eqref{eq:innovation}, the likelihood process remains fully consistent with the geometry-consistent proposal dynamics.

Under
$\mathbb{P}$,
the observation process satisfies

\[
dy_t
=
\eta(x_t,t)\,dt
+
dW_t,
\]

where $W_t$ is an $m$-dimensional standard Wiener process.

The following assumption guarantees that the likelihood process defines a valid exponential martingale.

\begin{assumption}[Novikov Condition]
\label{ass:novikov}

The measurement drift $\eta(x,t)$ satisfies

\[
\mathbb{E}_{\mathbb{P}}
\left[
\exp
\left(
\frac12
\int_0^T
\|\eta(x_s,s)\|^2
ds
\right)
\right]
<
\infty.
\]

\end{assumption}

Under Assumption~\ref{ass:novikov},
the likelihood process
$\mathcal{L}_t$
defines the Radon--Nikodym derivative

\begin{equation}
\frac{d\mathbb{Q}}{d\mathbb{P}}
=
\mathcal{L}_T,
\label{eq:RN_derivative}
\end{equation}

which induces a probability measure
$\mathbb{Q}$
equivalent to
$\mathbb{P}$.

Consequently,

\begin{equation}
\tilde W_t
=
y_t
-
\int_0^t
\eta(x_s,s)\,ds
\label{eq:innovation_brownian}
\end{equation}

is an $m$-dimensional standard Wiener process under $\mathbb{Q}$..

The following theorem establishes that the geometry-consistent proposal preserves the Bayesian posterior distribution.

\begin{theorem}[Posterior Preservation under the Geometry-Consistent Proposal]
\label{thm:posterior}

Suppose Assumption~\ref{ass:novikov} holds together with the regularity assumptions of Section~\ref{sec:GPD}. Then geometry-consistent proposal process~\eqref{eq:proj} defines a well-posed It\^o diffusion adapted to the observation filtration. Moreover, the Bayesian posterior conditioned on
$\mathcal{F}_t^y$
is preserved through the Kallianpur--Striebel representation,

\begin{equation}
\mathbb{E}_{\mathbb{P}}
\!\left[
u(x_t)
\mid
\mathcal{F}_t^y
\right]
=
\frac{
\mathbb{E}_{\mathbb{Q}}
\!\left[
u(x_t)\,
\mathcal{L}_t
\right]
}{
\mathbb{E}_{\mathbb{Q}}
\!\left[
\mathcal{L}_t
\right]
},
\label{eq:KS}
\end{equation}

for every bounded measurable test function
$u$.

\end{theorem}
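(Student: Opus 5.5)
The proof has two parts: well-posedness of the proposal diffusion, and the Kallianpur--Striebel identity \eqref{eq:KS}.

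\textbf{Well-posedness.} This part reduces to Theorem~\ref{thm:existence}. Under Assumptions~\ref{ass:regularity}--\ref{ass:projection}, the coefficients of \eqref{eq:proj} are locally Lipschitz with linear growth, since $P_\perp$ and $P_\parallel$ are bounded projectors. The driving terms $dy_t$ and $dW_t$ are continuous semimartingales, so a unique strong solution exists on $[0,T]$.

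Adaptedness to the observation filtration needs to be stated carefully. I would interpret it to mean that, conditionally on the observation path $\{y_s\}_{s\le t}$, the proposal is driven only by $y$ and by independent noise. In this sense it is non-anticipative: $x_t$ depends on $y$ only through $\{y_s\}_{s\le t}$.

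\textbf{Change of measure.} I would use the standard reference-measure construction rather than the measure $\mathbb{Q}$ exactly as it is written. Let $\mathbb{Q}$ be defined through $d\mathbb{Q}/d\mathbb{P}|_{\mathcal{F}_t}=\mathcal{L}_t^{-1}$, with $\mathcal{L}_t^{-1}=\exp\!\big(-\int_0^t\eta^\top dW_s-\tfrac12\int_0^t\|\eta\|^2ds\big)$. By Assumption~\ref{ass:novikov}, $\mathcal{L}_t^{-1}$ is a true $\mathbb{P}$-martingale, so $\mathbb{Q}\sim\mathbb{P}$. Girsanov's theorem then shows that $y_t=\int_0^t\eta\,ds+W_t$ is a $\mathbb{Q}$-Brownian motion. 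Moreover, because the Girsanov density changes only the law of the $W$-component, the law of $(x_0,B)$, and hence of the signal, is unchanged. The signal is therefore independent of $y$ under $\mathbb{Q}$.

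Rewriting $\mathcal{L}_t$ in terms of $y$ gives exactly \eqref{eq:RN}, with $d\mathbb{P}/d\mathbb{Q}=\mathcal{L}_t$. This reinterprets the paper's $d\mathbb{Q}/d\mathbb{P}=\mathcal{L}_T$ as its inverse, which is the consistent reading.

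\textbf{Abstract Bayes formula.} For bounded measurable $u$ and any $A\in\mathcal{F}_t^y$, I would verify that
\[
\mathbb{E}_{\mathbb{P}}[u(x_t)\mathbf 1_A]=\mathbb{E}_{\mathbb{Q}}[u(x_t)\mathcal{L}_t\mathbf 1_A].
\]
Conditioning both sides on $\mathcal{F}_t^y$ yields
\[
\mathbb{E}_{\mathbb{P}}[u(x_t)\mid\mathcal{F}_t^y]=\frac{\mathbb{E}_{\mathbb{Q}}[u(x_t)\mathcal{L}_t\mid\mathcal{F}_t^y]}{\mathbb{E}_{\mathbb{Q}}[\mathcal{L}_t\mid\mathcal{F}_t^y]}.
\]
The denominator is strictly positive because $\mathcal{L}_t>0$. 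Since the signal and $y$ are independent under $\mathbb{Q}$, these conditional expectations are integrals over the signal law with the observation path held fixed. This is the meaning of the expectations in \eqref{eq:KS}.

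\textbf{Proposal-independence and the main obstacle.} I expect the main difficulty to be linking this identity to the geometry-consistent proposal \eqref{eq:proj}. The right-hand side involves the law of the nominal signal \eqref{eq:state}, whereas the particles are drawn from \eqref{eq:proj}.

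My approach is to apply a second Girsanov transformation, this time on the signal path space. The drift difference between the two dynamics is
\[
f-\big(P_\perp f+P_\parallel(\dot y-\partial_t h)\big)=P_\parallel f-P_\parallel(\dot y-\partial_t h).
\]
The difficulty is that this difference must lie in the range of $G$ for Girsanov to apply. I would therefore impose a range condition $P_\parallel\subset\operatorname{range}G$, together with an additional Novikov-type bound on $G^\dagger$ applied to the drift difference. The resulting density multiplies $\mathcal{L}_t$ to give the importance weight.

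Any normalizing constants from this second density cancel in the ratio \eqref{eq:KS}. Therefore the posterior is the same whichever proposal is used, which is the claimed posterior preservation.

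I would also flag a further point. The formal $dy_t$ and $dW_t$ terms in \eqref{eq:proj} should be read in integrated form, driven by the given observation path.
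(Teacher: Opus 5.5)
\textbf{Parts 1 and 2 are sound.} Your well-posedness and change-of-measure parts follow the paper's route: well-posedness via Theorem~\ref{thm:existence}, then Novikov, Girsanov and Kallianpur--Striebel. They are in fact more careful than the paper's own argument. Inverting the density is necessary. Under $\mathbb{P}$ one has $\mathcal{L}_t=\exp\bigl(\int_0^t\eta^\top dW_s+\tfrac12\int_0^t\|\eta\|^2ds\bigr)$, which is not a $\mathbb{P}$-martingale. So \eqref{eq:KS}, with $\mathcal{L}_t$ in the numerator, requires $d\mathbb{P}/d\mathbb{Q}|_{\mathcal{F}_t}=\mathcal{L}_t$, with the expectations read conditionally on $\mathcal{F}_t^y$, exactly as you set it up. That argument proves \eqref{eq:KS} for the nominal signal \eqref{eq:state}. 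This is all the paper's proof actually invokes; it never connects the right-hand side to the law of \eqref{eq:proj}.

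\textbf{The gap is your second Girsanov step.} Besides adding hypotheses the theorem does not contain, this step cannot work.

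\emph{The $dy_t$ term is not a drift.} In \eqref{eq:proj}, the term $H_t^\top(H_tH_t^\top)^\dagger\,dy_t$ is driven by $y$, and you have just shown that $y$ is a $\mathbb{Q}$-Brownian motion. So ``$\dot y$'' is white noise, and the would-be Girsanov kernel $G^\dagger H^\top(HH^\top)^\dagger\dot y$ does not satisfy $\int_0^t\|\cdot\|^2ds<\infty$. Under your range condition, your Novikov-type bound therefore fails whenever the projection is nontrivial.

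\emph{A concrete case.} Take $h(x)=Hx$ with constant $H$, an invertible constant $G$, and $f\equiv0$. Conditionally on $y$, the proposal is the signal shifted by the path $H^\top(HH^\top)^\dagger y$. This path lies outside the Cameron--Martin space, so the two path laws are mutually singular. Hence there is no density to multiply $\mathcal{L}_t$.

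\emph{The $dW_t$ term.} You dropped the $-dW_t$ term from your drift difference, and it makes matters worse. With $W$ simulated independently, as your adaptedness reading requires, it adds a nonzero term to the diffusion matrix $GG^\top$. No equivalent change of measure can alter quadratic variation. So ``the posterior is the same whichever proposal is used'' is not established.

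\emph{Discrete time only.} A correction exists only in discrete time, as the factor $p(x_{k+1}\mid x_k)/q(x_{k+1}\mid x_k,y_{k+1})$ in \eqref{eq:weight_update}. Even there, the mean shift $H^\top(HH^\top)^\dagger(\Delta y_k-\eta\,\Delta t)$ is of order $\sqrt{\Delta t}$, the same order as the noise. Each step therefore adds an $O(1)$ random increment to the log-weights, and their variance over $[0,T]$ grows like $T/\Delta t$; this is the discrete shadow of the singularity. Algorithm~1 omits this factor altogether.

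What you can legitimately claim under the stated hypotheses is your first two parts, i.e.\ \eqref{eq:KS} for the nominal signal.
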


\begin{proof}

Assumption~\ref{ass:novikov} ensures that
$\mathcal{L}_t$
is a true martingale and therefore defines an equivalent probability measure through~\eqref{eq:RN_derivative}. By Girsanov's theorem, the transformed observation process~\eqref{eq:innovation_brownian} is a standard Wiener process under $\mathbb{Q}$.

The geometry-consistent proposal process developed in Section~\ref{sec:GPD} remains adapted to the observation filtration and satisfies the regularity assumptions required for existence and uniqueness of strong solutions. Consequently, it defines a well-posed non-anticipative diffusion under both probability measures.

The Bayesian posterior is then recovered through the classical Kallianpur--Striebel representation~\eqref{eq:KS}. Therefore, the proposed geometry-consistent propagation modifies only the proposal dynamics while preserving the Bayesian posterior distribution.
\end{proof}
\subsection{Particle Approximation}
\label{sec:particle}

The posterior preservation result established in the previous subsection guarantees that the geometry-consistent proposal process generates the correct Bayesian posterior under importance weighting. We now establish the corresponding Sequential Monte Carlo approximation.

Let $ \{x_t^{(i)}\}_{i=1}^{N}$ denote particles evolving independently according to the geometry-consistent proposal dynamics~\eqref{eq:proj}. The associated empirical measure is

\begin{equation}
\pi_t^{N}
:=
\frac{1}{N}
\sum_{i=1}^{N}
\delta_{x_t^{(i)}},
\label{eq:empirical_measure}
\end{equation}

where
$\delta_x$
denotes the Dirac measure concentrated at
$x$.

Each particle is assigned an importance weight proportional to the likelihood process,

\[
w_t^{(i)}
\propto
\mathcal{L}_t^{(i)},
\]

where
$\mathcal{L}_t^{(i)}$
is the likelihood process~\eqref{eq:RN}
evaluated along the trajectory of the
$i$-th particle.

The corresponding self-normalized particle approximation of a bounded measurable function
$u$
is

\begin{equation}
\Pi_t^{N}(u)
=
\frac{
\sum_{i=1}^{N}
u(x_t^{(i)})\,w_t^{(i)}
}{
\sum_{i=1}^{N}
w_t^{(i)}
}.
\label{eq:particle_estimator}
\end{equation}

The following theorem establishes convergence of the proposed particle approximation.

\begin{theorem}[Consistency of the Particle Approximation]
\label{thm:particle_consistency}

Suppose the assumptions of
Theorem~\ref{thm:posterior}
hold together with the standard assumptions for self-normalized Sequential Monte Carlo methods, including finite second moments and non-degenerate importance weights.

Then, for every bounded measurable test function
$u$,

\[
\Pi_t^{N}(u)
\xrightarrow{\mathbb{P}}
\mathbb{E}_{\mathbb{P}}
\!\left[
u(x_t)
\mid
\mathcal{F}_t^{y}
\right]\, as \  N\rightarrow\infty.\]

Furthermore,
\[
\mathbb{E}
\!\left[
\left| \Pi_t^{N}(u)
- \mathbb{E}_{\mathbb{P}}
\!\left[ u(x_t) \mid
\mathcal{F}_t^{y}
\right]
\right|^{2} \right]^{1/2} =
\mathcal{O}(N^{-1/2}).
\]

\end{theorem}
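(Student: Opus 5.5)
The plan is to reduce the statement to the standard self-normalized importance sampling argument, conditional on the observation path. Fix $t$ and condition on $\mathcal{F}_t^y$. Given the observations, the particles $x_t^{(i)}$ are i.i.d.\ draws from the law of the proposal process~\eqref{eq:proj}; Theorem~\ref{thm:existence} guarantees that this law is well defined. The weights $w_t^{(i)}=\mathcal{L}_t^{(i)}$ are the same deterministic functional of each particle's trajectory and the common observation path. Write
\[
A_N=\frac1N\sum_{i=1}^N u(x_t^{(i)})\mathcal{L}_t^{(i)},\qquad B_N=\frac1N\sum_{i=1}^N \mathcal{L}_t^{(i)},
\]
so that $\Pi_t^N(u)=A_N/B_N$. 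By Theorem~\ref{thm:posterior}, the target equals $a/b$, where $a=\mathbb{E}_{\mathbb{Q}}[u(x_t)\mathcal{L}_t]$ and $b=\mathbb{E}_{\mathbb{Q}}[\mathcal{L}_t]$. Here I interpret the expectations in \eqref{eq:KS} as expectations over the signal path with $y$ frozen, which is how the particles sample them.

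\textbf{Convergence in probability.} Conditionally on $\mathcal{F}_t^y$, the summands are i.i.d.\ with finite mean, which is ensured by the assumed finite second moments of the weights. The strong law of large numbers then gives $A_N\to a$ and $B_N\to b$ almost surely. Non-degeneracy of the weights gives $b>0$. The continuous mapping theorem applied to $(A,B)\mapsto A/B$ yields $\Pi_t^N(u)\to a/b$ conditionally almost surely. Integrating over the observation path gives convergence in $\mathbb{P}$-probability.

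\textbf{Rate.} Use the decomposition
\[
\frac{A_N}{B_N}-\frac ab=\frac{(A_N-a)-\tfrac ab (B_N-b)}{B_N}=\frac{1}{B_N}\cdot\frac1N\sum_{i=1}^N \mathcal{L}_t^{(i)}\Big(u(x_t^{(i)})-\tfrac ab\Big).
\]
The numerator is an average of i.i.d.\ centered terms whose variance is at most $4\|u\|_\infty^2\,\mathbb{E}[\mathcal{L}_t^2]$. Its second moment is therefore $\mathcal{O}(1/N)$ by the finite-second-moment assumption.

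\textbf{Main obstacle.} The hardest step is controlling the random denominator $1/B_N$ in $L^2$. I would first use the a priori bound $|\Pi_t^N(u)-a/b|\le 2\|u\|_\infty$, which holds because both quantities are weighted averages of $u$. Then I would split on the event $\{B_N\ge b/2\}$. On that event the error is bounded by $(2/b)$ times the numerator, contributing $\mathcal{O}(N^{-1/2})$ in $L^2$. On the complement, Chebyshev's inequality gives $\mathbb{P}(B_N<b/2)\le 4\operatorname{Var}(\mathcal{L}_t)/(Nb^2)$. That piece contributes at most $2\|u\|_\infty\cdot\mathcal{O}(N^{-1/2})$ to the $L^2$ norm.

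This yields the conditional bound $C(y)N^{-1/2}$, where $C(y)$ depends on $b^{-1}$ and $\mathbb{E}[\mathcal{L}_t^2]$. To obtain the unconditional statement, I would interpret the "non-degenerate importance weights" assumption as integrability of $C(y)^2$ over the observation law, for instance a uniform lower bound on $b$ and an upper bound on the weight second moment. Averaging over $y$ then gives the stated $\mathcal{O}(N^{-1/2})$ rate.
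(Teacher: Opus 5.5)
Your route is the paper's route written out in full. The paper's proof only asserts that $\Pi_t^N(u)$ is ``a standard self-normalized importance sampling estimator'' and cites Del Moral. You supply the conditional law of large numbers, the ratio decomposition, and the split on $\{B_N\ge b/2\}$ with Chebyshev, and these do give the $N^{-1/2}$ rate for a genuine self-normalized estimator.

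The gap is in identifying the limits, precisely at your sentence ``I interpret the expectations in \eqref{eq:KS} as expectations over the signal path with $y$ frozen, which is how the particles sample them.'' In the Kallianpur--Striebel formula the change of measure removes only the drift of $y$. Under $\mathbb{Q}$ the state keeps its prior law $\mu$ under the filtering model (drift $f$, diffusion $G$) and is independent of $y$, so $a$ and $b$ are integrals against $\mu$. Your particles are instead drawn from the path law $q^{y}$ of the projected dynamics \eqref{eq:proj}. Its drift $P_\perp f$ and its $y$-driven increment $P_\parallel(dy_t-\partial_t h\,dt-dW_t)$ differ from the prior's and depend on the observations. The strong law therefore gives $A_N\to\mathbb{E}_{q^{y}}[u(x_t)\mathcal{L}_t]$ and $B_N\to\mathbb{E}_{q^{y}}[\mathcal{L}_t]$, whose ratio is not $a/b$ in general. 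For the same reason the summands $\mathcal{L}_t^{(i)}(u(x_t^{(i)})-a/b)$ in your rate step are not centered. The identification holds only for the bootstrap proposal, $q^{y}=\mu$.

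What is missing is the path-space importance ratio $R_t=d\mu/dq^{y}$ evaluated along each particle. This is the continuous-time analogue of the factor $p(x_{k+1}\mid x_k)/q(x_{k+1}\mid x_k,y_{k+1})$ in \eqref{eq:weight_update}. With weights $\mathcal{L}_t^{(i)}R_t^{(i)}$ you would have $\mathbb{E}_{q^{y}}[u(x_t)\mathcal{L}_tR_t]=a$ and $\mathbb{E}_{q^{y}}[\mathcal{L}_tR_t]=b$. Your argument would then go through verbatim, with the second-moment hypothesis placed on $\mathcal{L}_tR_t$.

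You would also have to show that $R_t$ exists, and this is not automatic. Girsanov requires the proposal to differ from the prior only by a finite-energy drift in the range of $G$. The correction in \eqref{eq:proj} is instead driven by the increment $dy$, of order $\sqrt{\Delta t}$ per step. It therefore changes the quadratic variation of the particle paths, so the two path laws are in general mutually singular.

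The paper's one-line proof rests on the same unproved reduction, so you inherited this gap rather than created it. Still, as stated, with weights proportional to $\mathcal{L}_t^{(i)}$ alone, convergence to $\mathbb{E}_{\mathbb{P}}[u(x_t)\mid\mathcal{F}_t^y]$ does not follow.
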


\begin{proof}

Theorem~\ref{thm:posterior}
establishes that the geometry-consistent proposal preserves the Bayesian posterior distribution through the likelihood process. Consequently, the proposed estimator is a standard self-normalized importance sampling estimator.

The convergence result therefore follows directly from classical Sequential Monte Carlo theory using propagation-of-chaos arguments and the asymptotic consistency of self-normalized particle approximations~\cite{delmoral2004feynman}.

\end{proof}

\paragraph*{Weight Stability.}

Unlike the bootstrap particle filter, the proposed geometry-consistent proposal aligns particle propagation with the local measurement geometry before likelihood evaluation. Consequently, the mismatch between proposal and posterior distributions is reduced, leading to lower importance-weight dispersion and improved effective sample size. This theoretical behavior is verified experimentally in Section~\ref{sec:results}.

\subsection{Robustness and Error Decomposition}
\label{sec:robustness_theory}

The preceding analysis established that the proposed geometry-consistent proposal preserves the Bayesian posterior and admits a consistent particle approximation. We now characterize the effect of structural model mismatch on the resulting estimation error.

Suppose that the true system drift is

\[
f_{\mathrm{true}}(x,t)
= f(x,t) + \Delta f(x,t),
\]

where $\Delta f$ denotes an unknown model perturbation.

Since the proposed projection removes the component of the drift incompatible with the measurement geometry, only the residual mismatch lying outside the measurement-consistent tangent subspace contributes directly to the estimation error.

The following result formalizes this robustness property.

\begin{proposition}[Robustness under Structural Model Mismatch]
\label{prop:robustness}

Suppose the assumptions of Theorems~\ref{thm:projection} and~\ref{thm:particle_consistency}
hold. Assume further that the model perturbation $\Delta f(x,t)$
is locally integrable on every finite interval $[0,T]$.
Then the estimation error induced by structural model mismatch satisfies

\begin{equation}
\left\| e(t) \right\|
\le \int_0^t \left\|
(I-P_{\parallel}(x_s))
\Delta f(x_s,s) \right\|
ds + \int_0^t \| \lambda_s \| ds,
\label{eq:robustness}
\end{equation}

where
$P_{\parallel}$
is the measurement-consistent projection operator and
$\lambda_t$
is the geometric co-state introduced in Section~\ref{sec:costate}.

\end{proposition}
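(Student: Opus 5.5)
The plan is to interpret $e(t)$ as the pathwise discrepancy $e(t)=x_t^{\mathrm{true}}-x_t$ between a trajectory of the true system, driven by $f+\Delta f$, and the geometry-consistent proposal trajectory governed by Theorem~\ref{thm:projection}. Both are driven by the same Brownian path $B_t$ and the same observation record $y_t$, and they start from a common initial condition, so $e(0)=0$. The bound \eqref{eq:robustness} is linear and has no exponential factor. I will therefore aim for a first-order, frozen-coefficient comparison: the geometric operators $H_s$, $P_\parallel(x_s)$ and $\lambda_s$ are evaluated along the proposal path, and the Lipschitz feedback of $e$ into $f$ is neglected to first order. This is in the same spirit as the linearisation behind the Geometry Compatibility Principle. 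I would state this interpretation explicitly at the start of the proof.

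\textbf{Step 1 (decomposition of the true increment).} Write the true dynamics as $dx^{\mathrm{true}}_t=(f+\Delta f)\,dt+G\,dB_t$. Split $\Delta f=P_\parallel\Delta f+P_\perp\Delta f$ using the projectors \eqref{eq:projectors}, and likewise write $f=P_\perp f+P_\parallel f$.

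\textbf{Step 2 (rewriting the proposal increment).} Use \eqref{eq:modified_drift} to express the proposal drift as $f\,dt+H_t^\top\lambda_t\,dt$; this is equivalent to \eqref{eq:proj} by the derivation of \eqref{eq:lambda}. Subtracting the two increments gives, to first order,
\[
de(t)=P_\perp\Delta f\,dt+P_\parallel\Delta f\,dt-H_t^\top\lambda_t\,dt ,
\]
and the common diffusion terms cancel.

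\textbf{Step 3 (absorbing the parallel mismatch).} Under the true dynamics, the observation increment $dy_t$ entering \eqref{eq:costate} contains $H_t\Delta f\,dt$. Hence the co-state absorbs the measurement-visible mismatch: $H_t^\top\lambda_t\,dt$ contains $H_t^\top(H_tH_t^\top)^\dagger H_t\Delta f\,dt=P_\parallel\Delta f\,dt$. After this cancellation, only $P_\perp\Delta f$ and a residual co-state contribution remain. The residual collects the noise part $-(H_tH_t^\top)^\dagger dW_t$ and the regularisation error from the Tikhonov approximation.

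\textbf{Step 4 (integration and bounding).} Integrate from $0$ to $t$ using $e(0)=0$ and apply the triangle inequality. Bound the residual term by $\|H_s^\top\|\,\|\lambda_s\|$, and absorb the factor $\|H_s^\top\|$ into $\|\lambda_s\|$ using the boundedness in Assumptions~\ref{ass:jacobian}--\ref{ass:projection}; a normalisation constant may be needed. Local integrability of $\Delta f$, together with the continuity of the solution from Theorem~\ref{thm:existence}, makes both integrals finite. Finally, transfer the pathwise bound to the particle/posterior level through Theorem~\ref{thm:particle_consistency}: the self-normalised estimator converges to the posterior, so the estimation error inherits the bound up to an $\mathcal{O}(N^{-1/2})$ term that vanishes as $N\to\infty$.

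The main obstacle is Step 3 together with the constants in Step 4. The bound as stated has unit coefficient in front of $\|\lambda_s\|$ and no Gronwall exponential. Any honest treatment of the dependence of $f$ on the state error would instead produce $e^{Lt}$ factors and a factor $\sup\|H_s\|$. I would first try to keep the argument strictly at first order, working along the proposal path with frozen coefficients, which yields exactly \eqref{eq:robustness}. If that is judged too weak, the fallback is to state the inequality up to a constant $C_T=\sup_{s\le T}\|H_s\|\,e^{LT}$, with $L$ the Lipschitz constant of $P_\perp f$ from Theorem~\ref{thm:existence}.
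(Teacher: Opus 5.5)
Your outline has the same skeleton as the paper's proof. The paper sets $e(t)=x_{\mathrm{true}}(t)-x(t)$ with $e(0)=0$ and then simply \emph{asserts} the error equation $\dot e=(I-P_{\parallel}(x_t))\Delta f(x_t,t)+\lambda_t$, with all coefficients evaluated along the proposal path. That is exactly your frozen-coefficient reading: the Lipschitz feedback and any Gronwall factor are dropped (your remark about $e^{Lt}$ is correct), and $\lambda_t\in\mathbb{R}^m$ stands where $H_t^{\top}\lambda_t$ is dimensionally required. The paper then integrates and applies the triangle inequality. It never derives the error equation, so your Steps 2--3 go beyond it.

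That is also where your argument has a genuine gap: you use $\lambda_s$ twice. Step 2 gives $de=P_{\perp}\Delta f\,dt+r_s\,dt$ with $r_s=P_{\parallel}\Delta f-H_s^{\top}\lambda_s$. Step 3 spends $\lambda_s$ to cancel $P_{\parallel}\Delta f$, so $r_s$ is only what is left over (the stochastic and regularisation parts). Step 4 then bounds that leftover by $\|H_s^{\top}\|\,\|\lambda_s\|$, which does not follow. If the stochastic part of $H_s^{\top}\lambda_s$ happens to offset $P_{\parallel}\Delta f$, then for $H_s$ of full row rank $\lambda_s=0$ while $r_s=P_{\parallel}\Delta f\neq 0$. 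The honest estimate is $\|r_s\|\le\|P_{\parallel}\Delta f\|+\|H_s^{\top}\|\,\|\lambda_s\|$. This reinstates exactly the parallel mismatch the proposition claims is removed, and the factor $\|H_s^{\top}\|$ means the unit coefficient is not recovered in general.

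Three smaller points also need fixing.
\begin{itemize}
\item Under \eqref{eq:costate} as written, the measurement noise cancels; it survives only if $dW_t$ is dropped, as in Algorithm~1. The surviving stochastic term is the projected process noise: $r_s\,ds=-P_{\parallel}G\,dB_s$ if the proposal carries its own $G\,dB_s$.
\item Any such Brownian component makes $\lambda_t$ white-noise-like, so $\int_0^t\|\lambda_s\|\,ds=\infty$ almost surely, contrary to your finiteness claim in Step 4.
\item The particle-level transfer does not belong here. \eqref{eq:robustness} is a pathwise statement about $e(t)$, and the $\mathcal{O}(N^{-1/2})$ term belongs to Theorem~\ref{thm:error}.
\end{itemize}

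At the paper's first-order level you can make the deterministic part rigorous by computing $r_s$ exactly instead of bounding $H_s^{\top}\lambda_s$. Take frozen coefficients, $G\equiv 0$, $H_s$ of full row rank, and $\lambda_s$ computed with the Tikhonov inverse. Then \eqref{eq:costate} gives $H_s\Delta f=(H_sH_s^{\top}+\epsilon I)\lambda_s$, hence
\[
r_s=\epsilon\,H_s^{\top}(H_sH_s^{\top})^{-1}\lambda_s,\qquad \|r_s\|\le\bigl(\epsilon/\sigma_{\min}(H_s)\bigr)\|\lambda_s\|,
\]
where $\sigma_{\min}(H_s)$ is the smallest singular value. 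This yields \eqref{eq:robustness} with unit constant whenever $\epsilon\le\sigma_{\min}(H_s)$, and $r_s=0$ if the same regularised inverse is also used in $P_{\parallel}$. The stochastic residual cannot be controlled by $\|\lambda_s\|$ in this way. To include it you would have to do what the paper does: take the error equation, with the residual identified with the co-state term, as a postulate rather than a consequence.
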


\begin{proof}
Let $e(t):=x_{\mathrm{true}}(t)-x(t)$ denote the estimation error between the true system and the geometry-consistent proposal process. Assuming identical initial conditions,$e(0)=0.$

Under structural model mismatch, $f_{\mathrm{true}} = f+\Delta f,$ the projected dynamics remove the component of the perturbation aligned with the measurement-consistent tangent subspace. Consequently, the estimation error evolves according to

\[
\dot e(t) = (I-P_{\parallel}(x_t))
\Delta f(x_t,t)
+ \lambda_t,
\]

where $\lambda_t$ represents the residual dynamics--measurement incompatibility quantified by the geometric co-state.
Integrating over the interval $[0,t]$ gives
\[
e(t) = \int_0^t (I-P_{\parallel}(x_s))
\Delta f(x_s,s)\,ds
+ \int_0^t \lambda_s\,ds.
\]

Applying the triangle inequality immediately yields

\[
\|e(t)\| \le \int_0^t
\| (I-P_{\parallel}(x_s))
\Delta f(x_s,s) \| ds + \int_0^t \| \lambda_s \| ds,
\]

which proves~\eqref{eq:robustness}.

\end{proof}

Combining the robustness estimate above with the Monte Carlo approximation error yields the overall estimation error.

\begin{theorem}[Error Decomposition]
\label{thm:error}

Under the assumptions of
Proposition~\ref{prop:robustness},
there exists a constant
$C>0$
such that

\begin{equation}
\begin{aligned}
\left|
\Pi_t^N(u)
-
\pi_t^{\mathrm{true}}(u)
\right|
\le\;&
C
\int_0^t
\|
(I-P_{\parallel}(x_s))
\Delta f(x_s,s)
\|
ds
\\
&
+
C
\int_0^t
\|
\lambda_s
\|
ds
\\
&
+
\mathcal{O}(N^{-1/2}).
\end{aligned}
\label{eq:error_bound}
\end{equation}

\end{theorem}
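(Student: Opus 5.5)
The plan is to split the error into a Monte Carlo part and a model-mismatch part, and to bound each with an earlier result. Let $\pi_t$ denote the nominal posterior $\mathbb{E}_{\mathbb{P}}[u(x_t)\mid\mathcal{F}_t^y]$ given by the Kallianpur--Striebel formula of Theorem~\ref{thm:posterior}. Adding and subtracting $\pi_t(u)$ and using the triangle inequality gives
\[
\left|\Pi_t^N(u)-\pi_t^{\mathrm{true}}(u)\right|
\le
\left|\Pi_t^N(u)-\pi_t(u)\right|
+
\left|\pi_t(u)-\pi_t^{\mathrm{true}}(u)\right|.
\]
By Theorem~\ref{thm:particle_consistency}, the first term is $\mathcal{O}(N^{-1/2})$ in $L^2$, and hence in the stochastic-order sense in which \eqref{eq:error_bound} is read. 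All the remaining work is therefore in the second term, the bias that structural mismatch induces in the filter.

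For the bias term, I will first restrict the test functions $u$ to be bounded and Lipschitz, with Lipschitz constant $L_u$. The statement is written as if $u$ were fixed, and the constant $C$ will absorb $L_u$. I will then couple the true state and the geometry-consistent proposal trajectory on the same probability space, driven by the same noise and the same observation path, exactly as in the proof of Proposition~\ref{prop:robustness}. That gives the pathwise estimate $|u(x_{\mathrm{true}}(t))-u(x(t))|\le L_u\|e(t)\|$.

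Next I will insert this estimate into the Kallianpur--Striebel ratio. The numerator and denominator of \eqref{eq:KS} change both through $u(x_t)$ and through the likelihood $\mathcal{L}_t$, which depends on $\eta$ along the path. By Assumption~\ref{ass:regularity}, $\eta=H f+\partial_t h$ is locally Lipschitz in $x$. So the perturbation of $\log\mathcal{L}_t$ is controlled by $\int_0^t\|e(s)\|\,ds$ times a local Lipschitz constant, and under Novikov (Assumption~\ref{ass:novikov}) the weights have finite exponential moments. A first-order expansion of the ratio $a/b$ around the nominal values, together with $\|u\|_\infty$ and a lower bound on the normalizer $\mathbb{E}_{\mathbb{Q}}[\mathcal{L}_t]>0$, then bounds $|\pi_t(u)-\pi_t^{\mathrm{true}}(u)|$ by a constant times $\sup_{s\le t}\|e(s)\|$. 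Finally, \eqref{eq:robustness} bounds that supremum by the two integrals, which yields \eqref{eq:error_bound}.

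The main obstacle is this ratio step. The constant $C$ must be uniform on $[0,t]$, which requires localization: I would stop the paths at the exit time from a large ball, where the Lipschitz constants are bounded, and control the exit probability through the linear-growth condition. The likelihood perturbation also produces terms in $\int\|e\|$ and not only in $\|e(t)\|$. I would handle these by using the monotone-in-$t$ form of \eqref{eq:robustness} and absorbing a factor of $t\le T$ into $C$. If the deterministic-looking bound cannot be obtained pathwise, I would state it in expectation instead, reading the integrals in \eqref{eq:error_bound} as evaluated along the coupled reference trajectory.
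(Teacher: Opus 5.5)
Your decomposition is the paper's. Its proof is just the triangle inequality: it credits the two integrals to Proposition~\ref{prop:robustness} and the $\mathcal{O}(N^{-1/2})$ term to Theorem~\ref{thm:particle_consistency}. It gives no argument for how a bound on the state error $e(t)$ becomes a bound on $|\pi_t(u)-\pi_t^{\mathrm{true}}(u)|$. You correctly identify that passage as the real content, but the bridge you build for it does not hold.

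Proposition~\ref{prop:robustness} is about one specific pair of paths: the data-generating state $x_{\mathrm{true}}$ and a proposal path driven by the observations \emph{produced by that state}. That is what makes $P_{\parallel}\Delta f$ drop out of $\dot e$. By contrast, $\pi_t^{\mathrm{true}}(u)$ is a likelihood-weighted average over signal paths. In its Kallianpur--Striebel representation those paths are, under the reference measure, independent of $y$, so none of them generated $y$. Coupling each of them with a $y$-driven proposal path through a common $B$ therefore does not put you in the setting of Proposition~\ref{prop:robustness}. Hence \eqref{eq:robustness} gives no control of the pairwise differences inside the expectations. If instead $x_{\mathrm{true}}$ is the single actual trajectory, then $|u(x_{\mathrm{true}}(t))-u(x(t))|\le L_u\|e(t)\|$ compares two points, not two conditional laws, and $\pi_t^{\mathrm{true}}(u)$ is not $u(x_{\mathrm{true}}(t))$.

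The likelihood step also fails independently. The log-likelihood difference contains the It\^o integral $\int_0^t(\eta(x_{\mathrm{true}}(s),s)-\eta(x_s,s))^{\top}dy_s$. This is not bounded pathwise by a Lipschitz constant times $\int_0^t\|e(s)\|\,ds$. Integrals against $dy$ are controlled only in moments (It\^o isometry, Burkholder--Davis--Gundy), or pathwise after an integration by parts that needs more smoothness than the assumptions provide.

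Moreover, in the paper's formulation the observation drift is built from the assumed dynamics, $\eta=Hf+\partial_t h$. So the true likelihood uses $H(f+\Delta f)+\partial_t h$, and the log-likelihood shifts by terms in $H\Delta f=HP_{\parallel}\Delta f$. These are not functions of $e$ and cannot be absorbed through \eqref{eq:robustness}.

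Positivity of $\mathbb{E}_{\mathbb{Q}}[\mathcal{L}_t]$ is also not a uniform lower bound, so your $C$ would depend on the observation path. What this route can realistically give is a bound by weighted moments of the error along signal paths that did not generate $y$, and no estimate is available for that error. It also covers only Lipschitz $u$, which is narrower than the bounded measurable $u$ of Theorem~\ref{thm:particle_consistency}. Falling back to reading the integrals ``in expectation'' changes the statement rather than proving it.
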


\begin{proof}

The first two terms follow directly from the robustness estimate of
Proposition~\ref{prop:robustness}.

The final
$\mathcal{O}(N^{-1/2})$
term follows from the particle approximation result established in
Theorem~\ref{thm:particle_consistency}. Combining these bounds gives
\eqref{eq:error_bound}.

\end{proof}

\paragraph*{Interpretation.}

Equation~\eqref{eq:error_bound}
shows that the total estimation error consists of three complementary contributions. The first term represents the component of structural model mismatch orthogonal to the measurement-consistent tangent subspace. The second term is governed by the geometric co-state, which quantifies the residual incompatibility between the propagated dynamics and the instantaneous measurement geometry. The third term corresponds to the finite-sample approximation error inherent to Sequential Monte Carlo methods.

Consequently, the proposed Geometry-Consistent Particle Filter reduces sensitivity to structural model uncertainty by attenuating the mismatch before Bayesian weighting, thereby improving robustness while preserving the standard Monte Carlo convergence rate.

\section{From Continuous-Time Theory to Particle Filtering}
\label{sec:SGPF}

Sections~\ref{sec:GPD} and~\ref{sec:bayesian} established the geometry-consistent proposal dynamics and proved Bayesian posterior preservation. This section derives the corresponding discrete-time particle filter. The continuous-time proposal is first discretized, after which it is incorporated into the standard Sequential Monte Carlo framework. The resulting Geometric Projection Particle Filter (GPF) differs from the bootstrap particle filter only in the proposal stage, while Bayesian importance weighting and resampling remain unchanged.

\subsection{Discrete-Time Realization}
\label{sec:discretization}

The continuous-time geometry-consistent proposal derived in Section~\ref{sec:GPD} is implemented in discrete time using the Euler--Maruyama approximation. For a uniform discretization
$\{t_k\}_{k=0}^{K}$
with sampling interval
$\Delta t=t_{k+1}-t_k$,
the projected proposal evolves as

\begin{equation}
x_{k+1}
=
x_k
+
f_{\mathrm{proj}}(x_k,t_k)\,\Delta t
+
G(x_k,t_k)\,\Delta W_k,
\label{eq:discrete_proj}
\end{equation}

where
$f_{\mathrm{proj}}$
is the projected drift defined in~\eqref{eq:proj} and
$\Delta W_k\sim\mathcal{N}(0,\Delta t\,I)$
is the discrete Wiener increment.

Under the regularity assumptions of Section~\ref{sec:GPD}, the Euler--Maruyama approximation converges weakly to the continuous-time proposal as
$\Delta t\rightarrow0$.
Since the discretization modifies only the proposal dynamics, Bayesian posterior estimation remains unchanged and is recovered through the standard importance-weighting procedure established in Section~\ref{sec:bayesian}. Consequently, the discrete-time realization preserves the probabilistic consistency of the continuous-time formulation while introducing only the standard numerical discretization error associated with the Euler--Maruyama approximation.
\subsection{Geometric Projection Particle Filter}
\label{sec:gpf}

The proposed Geometric Projection Particle Filter (GPF) is obtained by incorporating the geometry-consistent proposal~\eqref{eq:discrete_proj} into the standard Sequential Monte Carlo framework. Relative to the classical bootstrap particle filter, the proposed algorithm modifies only the proposal propagation, while Bayesian importance weighting, normalization, and resampling remain unchanged.

The normalized importance weights evolve according to

\begin{equation}
w_{k+1}^{(i)}
\propto
w_k^{(i)}
\,
\frac{
p(y_{k+1}\mid x_{k+1}^{(i)})
\,p(x_{k+1}^{(i)}\mid x_k^{(i)})
}{
q(x_{k+1}^{(i)}\mid x_k^{(i)},y_{k+1})
},
\label{eq:weight_update}
\end{equation}

where
$q(\cdot)$
denotes the proposal distribution.

For the bootstrap particle filter,

\[
q(x_{k+1}\mid x_k,y_{k+1})
=
p(x_{k+1}\mid x_k),
\]

so particle propagation depends solely on the assumed system dynamics. Under structural model uncertainty, the proposal progressively departs from the posterior distribution, increasing importance-weight dispersion and accelerating particle degeneracy.

The proposed GPF replaces the nominal proposal with the geometry-consistent projected dynamics, thereby improving proposal quality before Bayesian importance weighting. Consequently, the dispersion of the importance weights is reduced,

\begin{equation}
\operatorname{Var} \!\left(\log w^{\mathrm{GPF}} \right)
\lesssim
\operatorname{Var} \!\left( \log w^{\mathrm{SPF}} \right),
\label{eq:weight_variance}
\end{equation}

where the remaining variability is governed by the residual dynamics--measurement inconsistency quantified by the geometric co-state,
$\lambda_t$.

The reduction in importance-weight dispersion increases the effective sample size,

\begin{equation}
\mathrm{ESS} = \frac{1} {\sum_{i=1}^{N} (\bar w^{(i)})^2},
\label{eq:ess}
\end{equation}

thereby preserving particle diversity and improving sampling efficiency under structural model uncertainty.

The resulting Geometric Projection Particle Filter therefore differs from the classical bootstrap particle filter only in the proposal construction, while preserving the standard Bayesian filtering formulation. The complete implementation is summarized in Algorithm~1, and its theoretical predictions are validated experimentally in Section~\ref{sec:results}.
\subsection{Computational Complexity}
\label{sec:complexity}

The proposed Geometric Projection Particle Filter (GPF) differs from the classical bootstrap particle filter only in the proposal stage. Let $N$, $n$, and $m$ denote the number of particles, state dimension, and measurement dimension, respectively.

The bootstrap particle filter scales linearly with the number of particles,
$\mathcal{C}_{\mathrm{SPF}}=\mathcal{O}(N)$. Relative to SPF, GPF additionally evaluates the measurement Jacobian, computes the regularized Moore--Penrose pseudoinverse of $HH^\top\in\mathbb{R}^{m\times m}$, and projects the nominal dynamics, yielding $\mathcal{C}_{\mathrm{GPF}} = \mathcal{O}\!\left(N(m^3+nm)\right).$

For spacecraft navigation, the measurement dimension is typically small ($m\le3$). The projection therefore introduces only a constant additional cost, preserving the overall linear scaling, $ \mathcal{C}_{\mathrm{GPF}} = \mathcal{O}(N).$ The memory requirement is $\mathcal{M}=\mathcal{O}(Nn)$, with negligible additional storage for the projection matrices. Consequently, GPF preserves the linear scalability of the bootstrap particle filter while requiring only a constant additional cost for geometry-consistent proposal construction.
\section{Algorithmic Realization}
This section presents the discrete-time implementation of the Geometric Projection Particle Filter (GPF), directly realizing the projected proposal dynamics of Section~\ref{sec:GPD} together with the Bayesian importance-weighting procedure established in Section~\ref{sec:bayesian}.

\begin{algorithm}[t]
\caption{Geometric Projection Particle Filter (GPF)}
\begin{algorithmic}[1]
\State Initialize particles $\{x_0^{(i)}\}_{i=1}^N$ from the prior distribution
\State Initialize weights $w_0^{(i)} = 1/N$
\For{each time step $t_k$}
\For{each particle $i$}

\State Compute measurement Jacobian $H_{t_k}^{(i)}$

\State Form projection operators
\[
P_{\parallel}^{(i)} =
H_{t_k}^{(i)\top}\!\bigl(H_{t_k}^{(i)}H_{t_k}^{(i)\top}\bigr)^\dagger
H_{t_k}^{(i)}
\]

\State \textbf{Prediction:}
\[
x_{t_k}^{(i),-} = x_{t_{k-1}}^{(i)} + f(x_{t_{k-1}}^{(i)},t_{k-1})\,\Delta t
\]

\State Compute co-state
\[
\lambda_{t_k}^{(i)} =
(H_{t_k}^{(i)}H_{t_k}^{(i)\top})^\dagger
\left(\Delta y_k - \eta(x_{t_k}^{(i),-},t_k)\,\Delta t\right)
\]

\State \textbf{Projection:}
\[
x_{t_k}^{(i)} =
x_{t_k}^{(i),-}
+ P_{\parallel}^{(i)}\!\big(\Delta y_k - \eta(x_{t_k}^{(i),-},t_k)\,\Delta t\big)
+ \sqrt{\Delta t}\, G(x_{t_k}^{(i),-},t_k)\,\xi_k^{(i)}
\]

\State Update weights:
\[
\log w_{k+1}^{(i)} =
\log w_k^{(i)} + \eta(x_{t_k}^{(i)},t_k)^\top \Delta y_k
- \tfrac12 \|\eta(x_{t_k}^{(i)},t_k)\|^2 \Delta t
\]

\EndFor

\State Normalize weights $\bar w_{t_k}^{(i)}$
\State Compute effective sample size (ESS) as defined in~\eqref{eq:ess}

\If{$\mathrm{ESS}(t_k) < \tau N$}
\State Resample particle set
\EndIf

\State Compute ensemble statistics of $\{\lambda_{t_k}^{(i)}\}$

\EndFor
\end{algorithmic}
\end{algorithm}

Algorithm~1 summarizes the implementation of the Geometric Projection Particle Filter (GPF). The algorithm realizes the projected proposal dynamics developed in Section~\ref{sec:GPD} together with the Bayesian importance-weighting procedure established in Section~\ref{sec:bayesian}. Since particles evolve independently, the implementation is naturally parallelizable.

The projected proposal is constructed from the measurement Jacobian and its regularized Moore--Penrose pseudoinverse. In practice, Tikhonov regularization is employed whenever
$H_{t_k}^{(i)}H_{t_k}^{(i)\top}$ becomes ill-conditioned, ensuring numerical robustness under partial observability and near-singular measurement configurations.

The geometric co-state $\lambda_t$ is evaluated independently for each particle as a diagnostic measure of dynamics--measurement consistency. Although it does not modify the Bayesian weighting or resampling stages, it provides an intrinsic indicator of estimator consistency throughout the filtering process.

\paragraph*{Implementation Remarks.}

The proposed algorithm differs from the bootstrap particle filter only in the proposal stage. Particles are first propagated using the nominal dynamics and then projected onto the measurement-consistent tangent subspace before Bayesian importance weighting. Likelihood evaluation, effective sample size computation, and resampling remain identical to the standard Sequential Monte Carlo framework, allowing straightforward integration into existing particle-filter implementations.
    
\begin{figure}[t]
\centering
\includegraphics[width=0.49\linewidth]{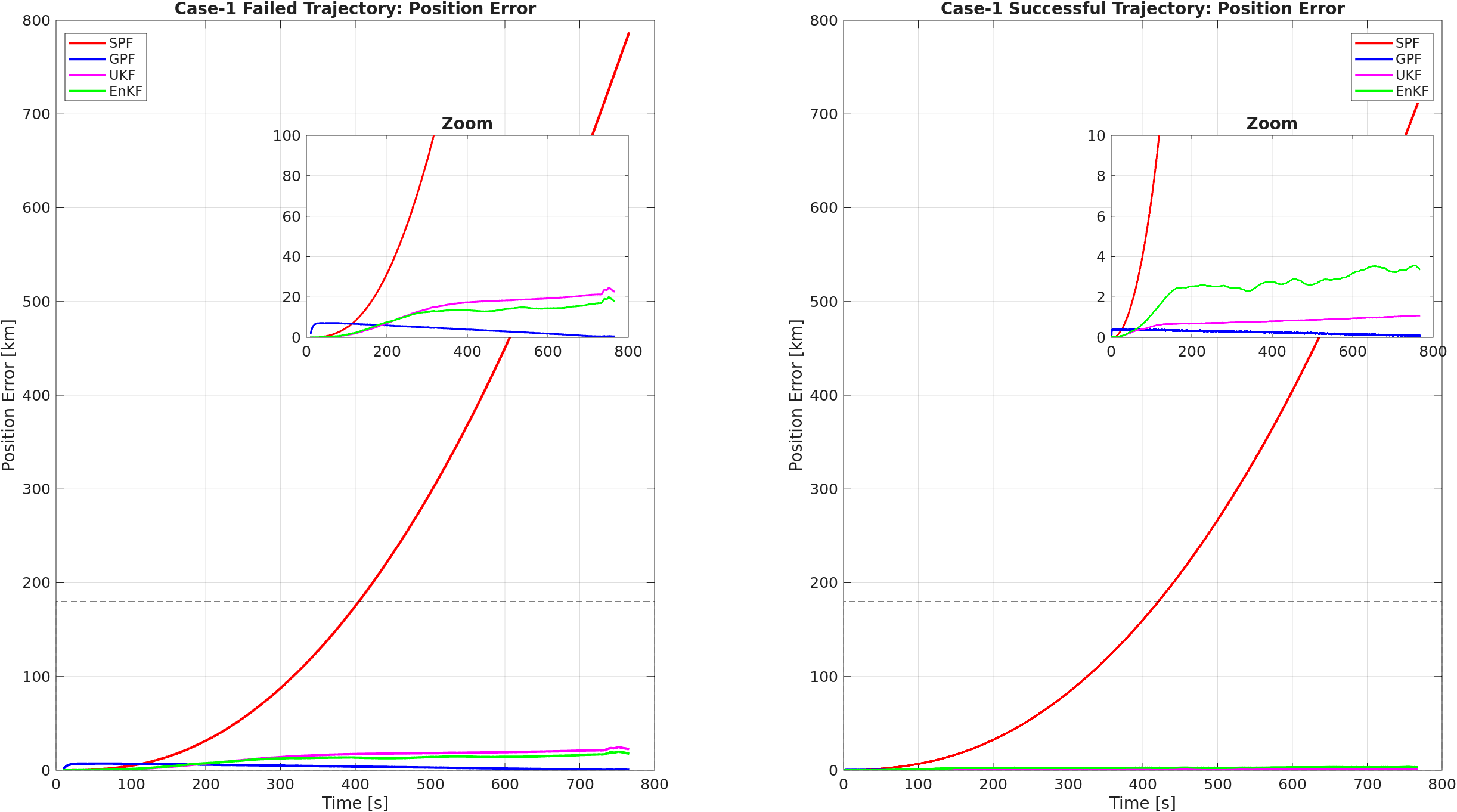}
\includegraphics[width=0.49\linewidth]{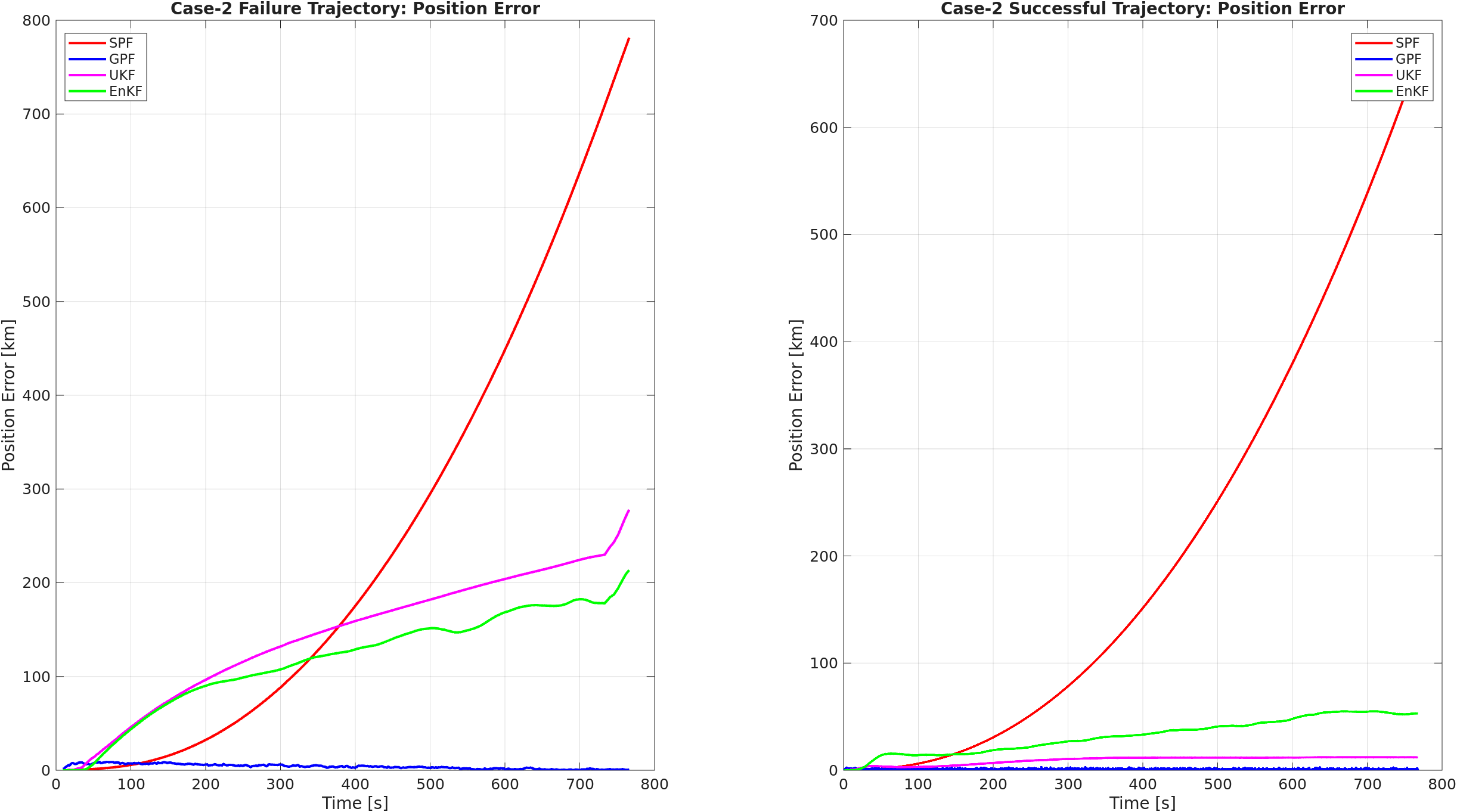}
\caption{
Estimation error over time for Case-1 (full observability, left) and Case-2 (partial observability, right). SPF exhibits rapid divergence, while EKF, UKF, and EnKF degrade under model mismatch. GPF maintains bounded error in both cases.
}
\label{fig:error_evolution}
\end{figure}

\begin{figure}[t]
\centering
\includegraphics[width=0.49\linewidth]{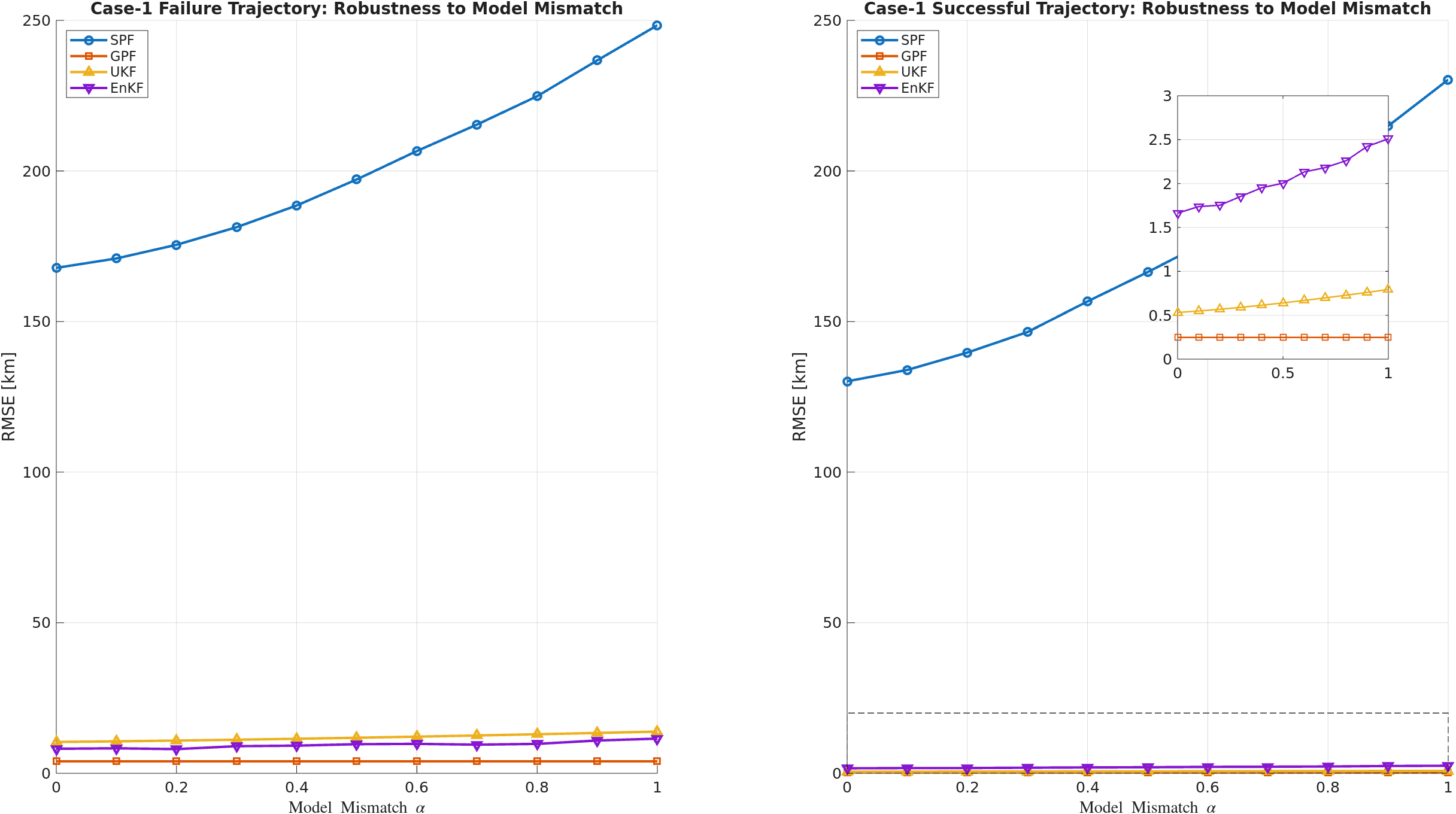}
\includegraphics[width=0.49\linewidth]{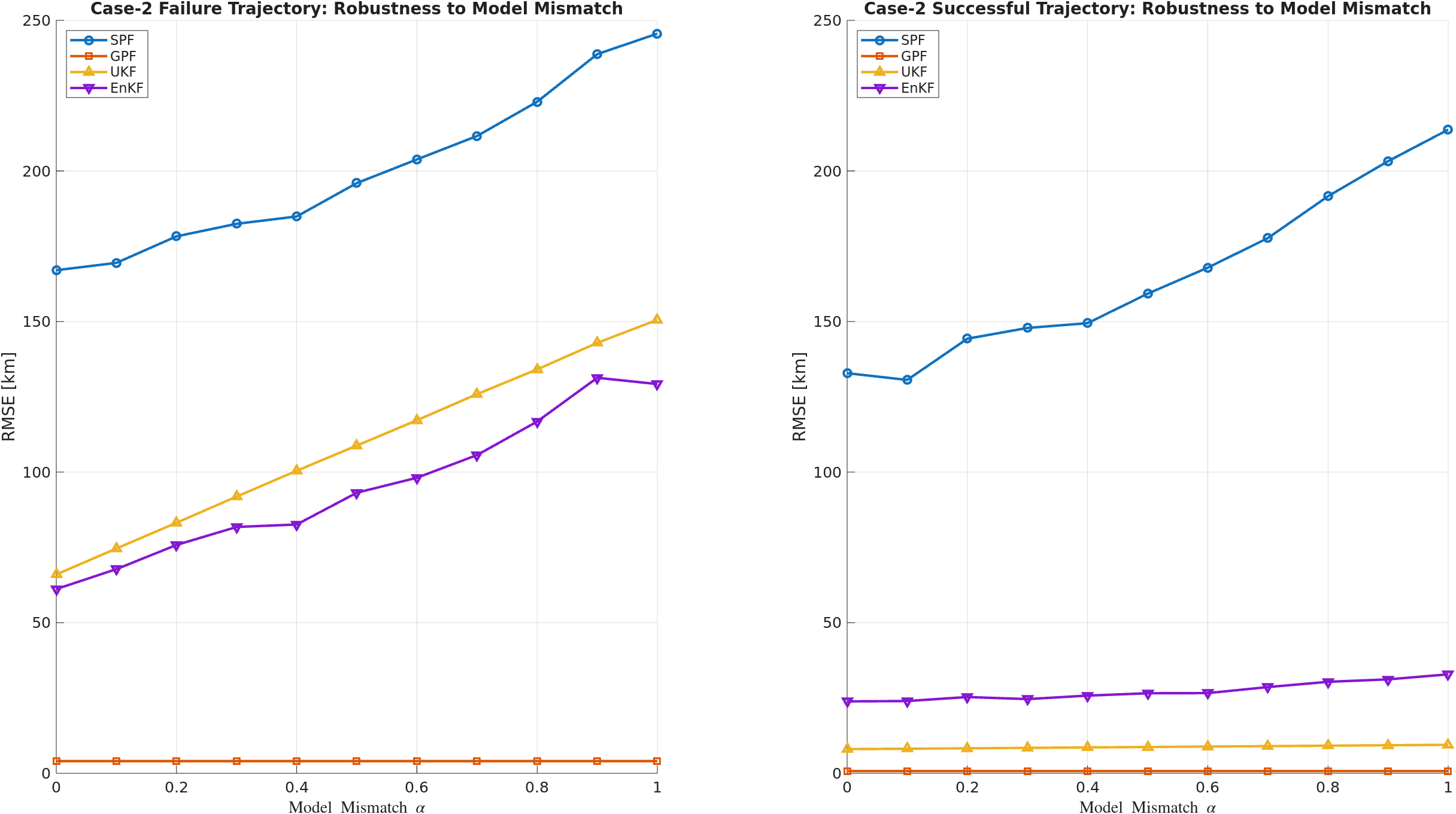}
\caption{
Estimation error versus model mismatch magnitude for Case-1 and Case-2. Classical filters degrade significantly as mismatch increases, whereas GPF remains stable.
}
\label{fig:model_mismatch}
\end{figure}

\begin{figure}[t]
\centering
\includegraphics[width=0.49\linewidth]{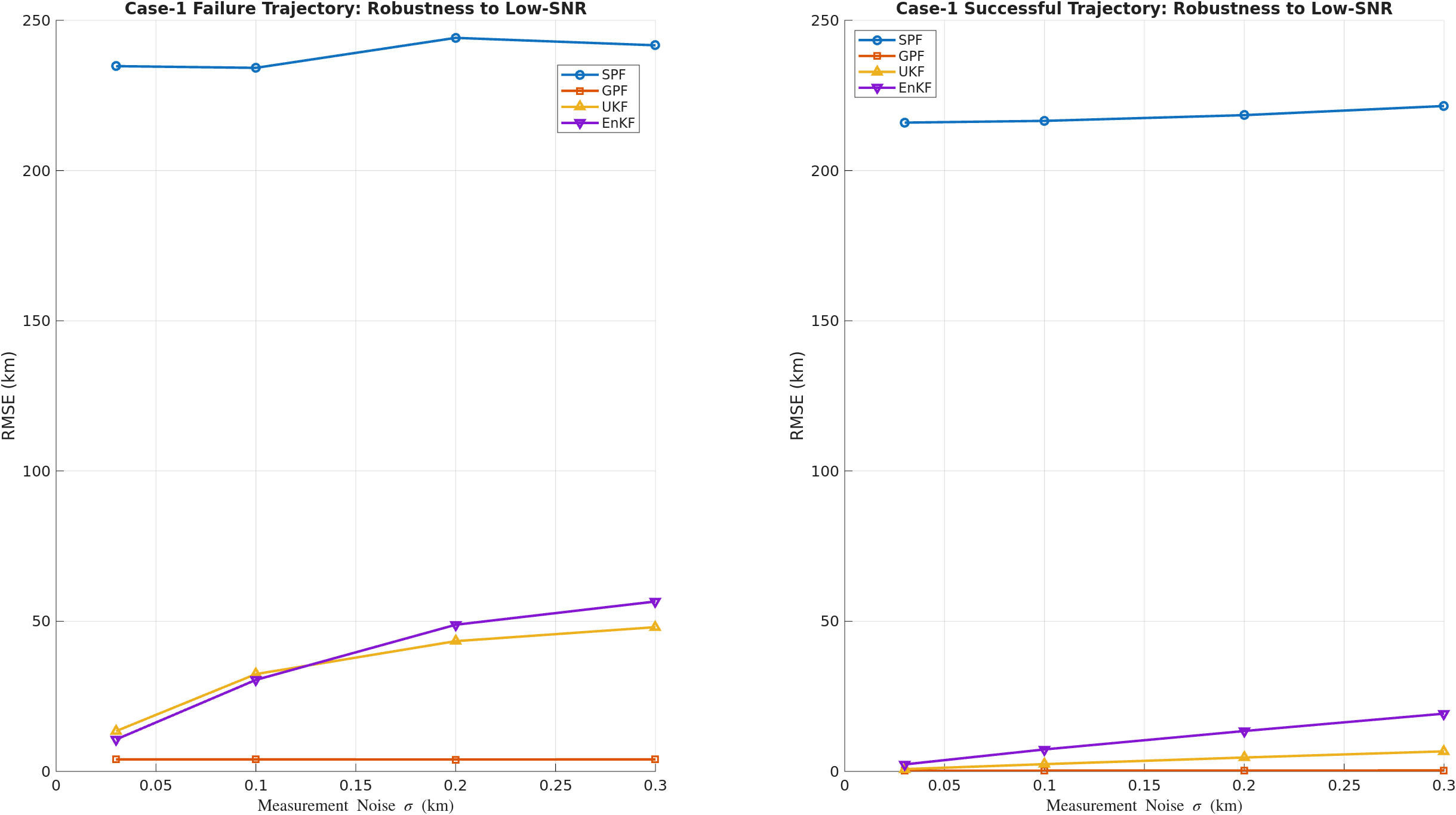}
\includegraphics[width=0.49\linewidth]{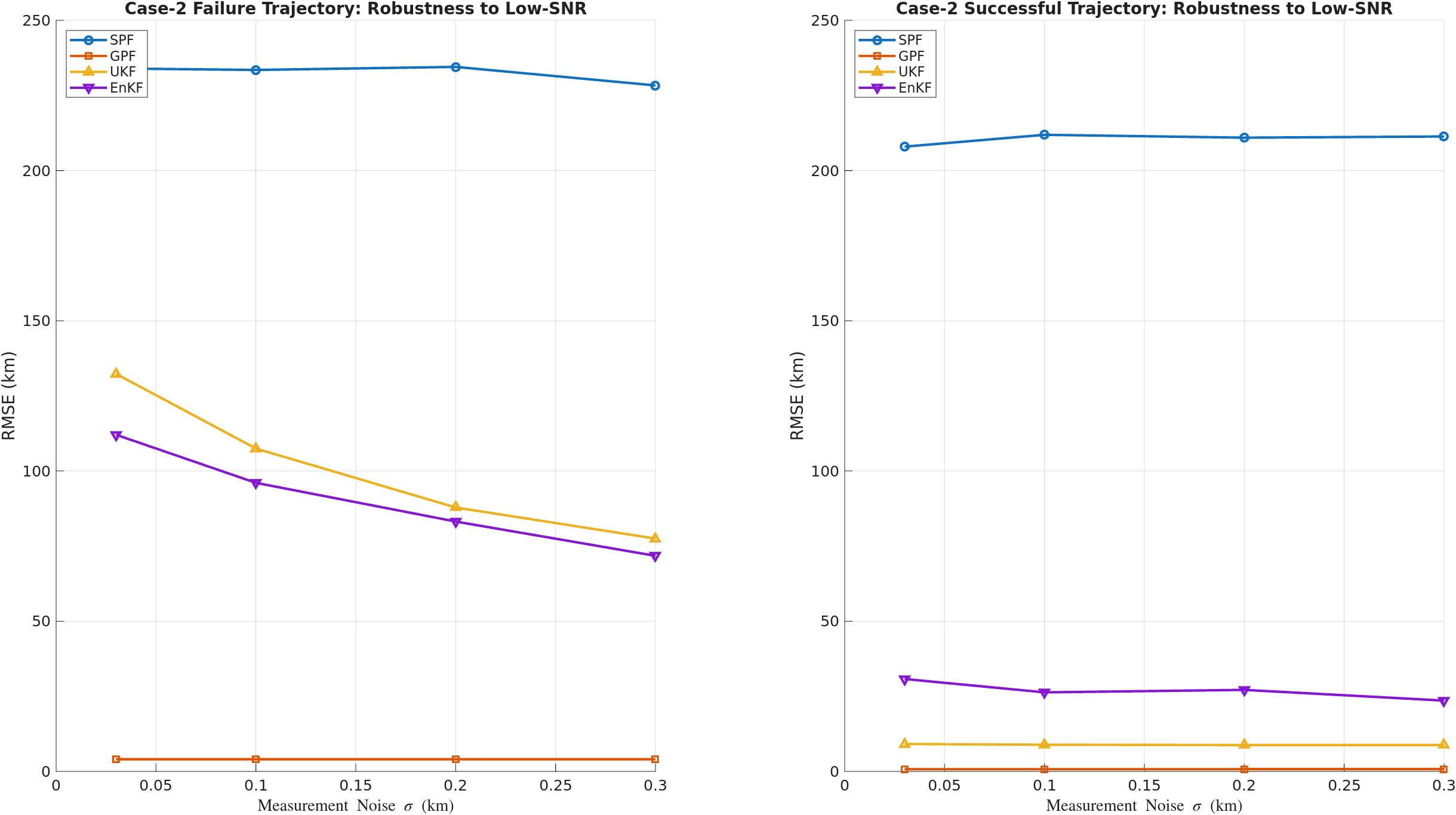}
\caption{
Estimation error versus measurement noise level for Case-1 and Case-2. Classical filters degrade with increasing noise, while GPF remains robust.
}
\label{fig:snr}
\end{figure}

\begin{figure}[t]
\centering
\includegraphics[width=0.49\linewidth]{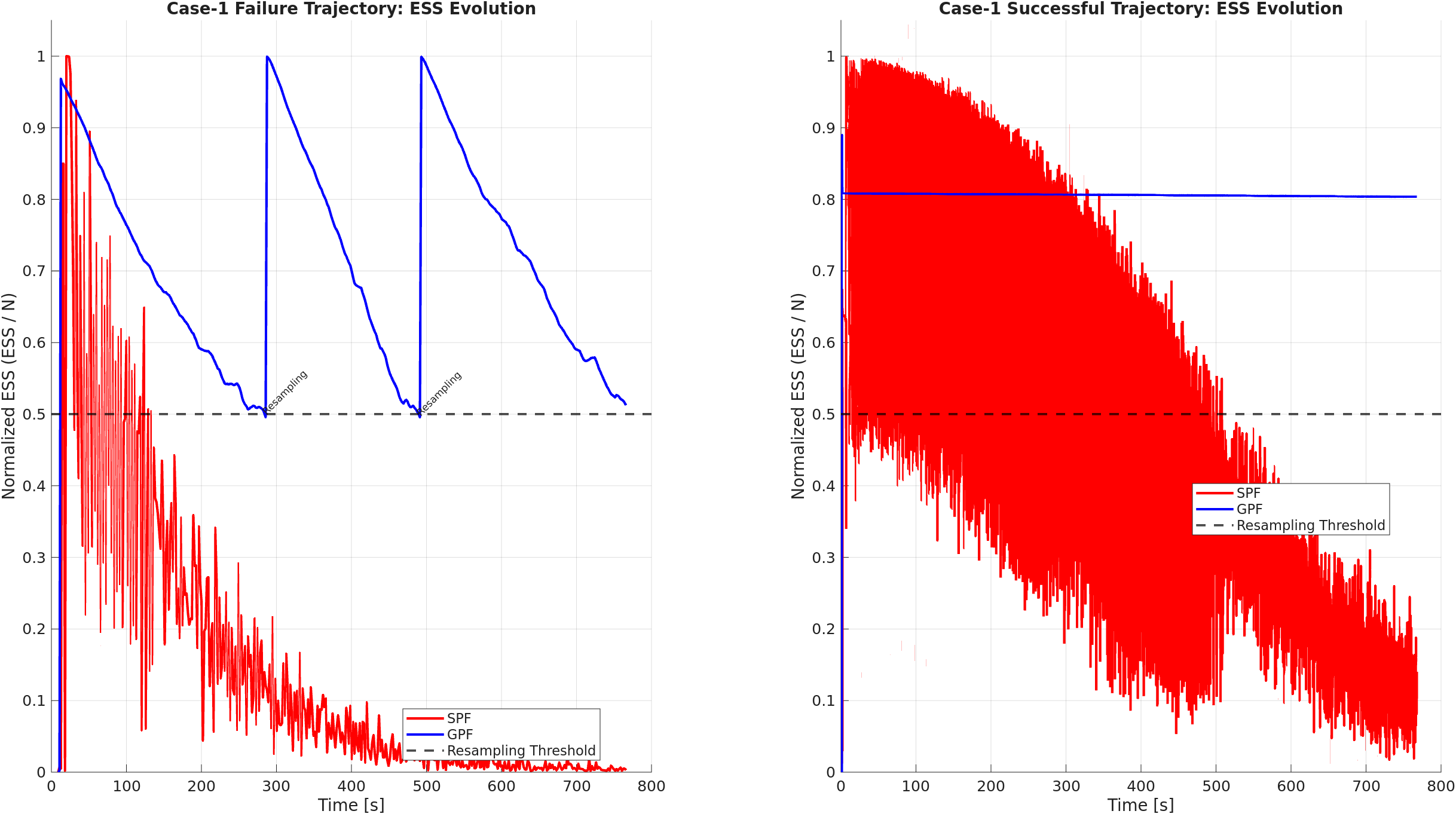}
\includegraphics[width=0.49\linewidth]{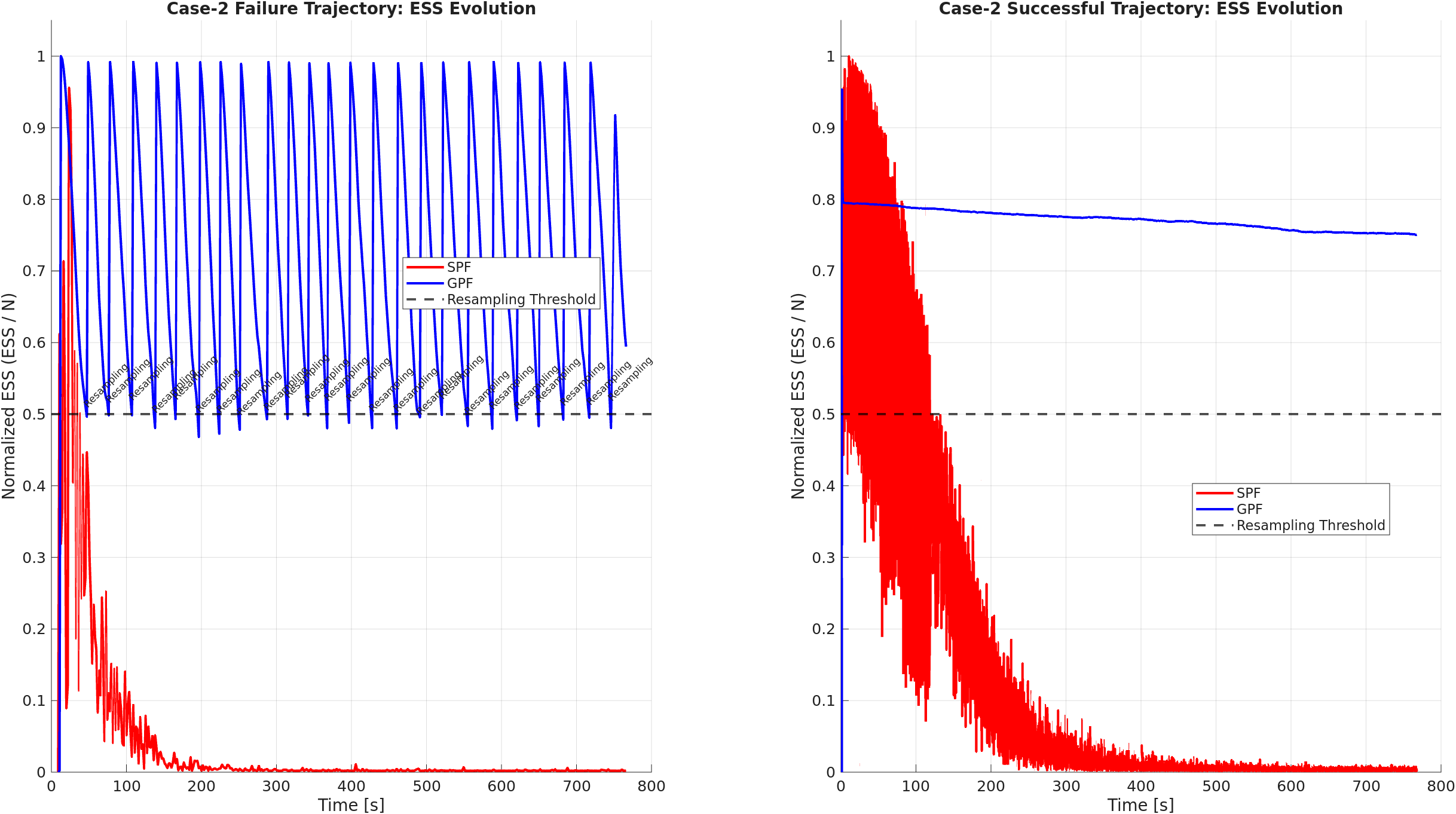}
\caption{
Normalized effective sample size (ESS) for Case-1 and Case-2. SPF exhibits rapid degeneracy, while GPF maintains high ESS, indicating improved weight stability.
}
\label{fig:ess}
\end{figure}

\begin{figure}[t]
\centering
\begin{minipage}{0.49\textwidth}
\centering
\includegraphics[width=\linewidth]{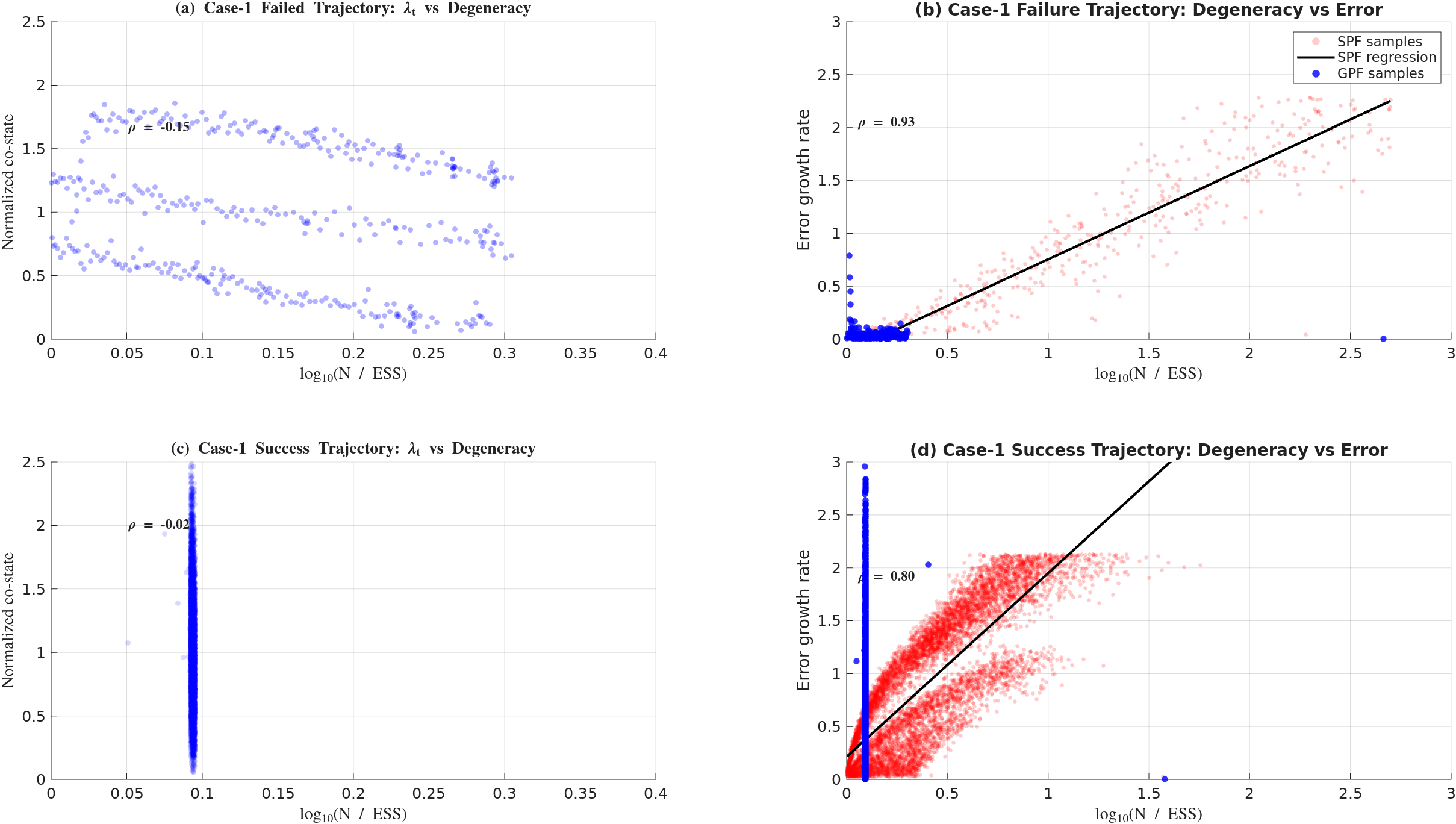}
\end{minipage}
\begin{minipage}{0.49\textwidth}
\centering
\includegraphics[width=\linewidth]{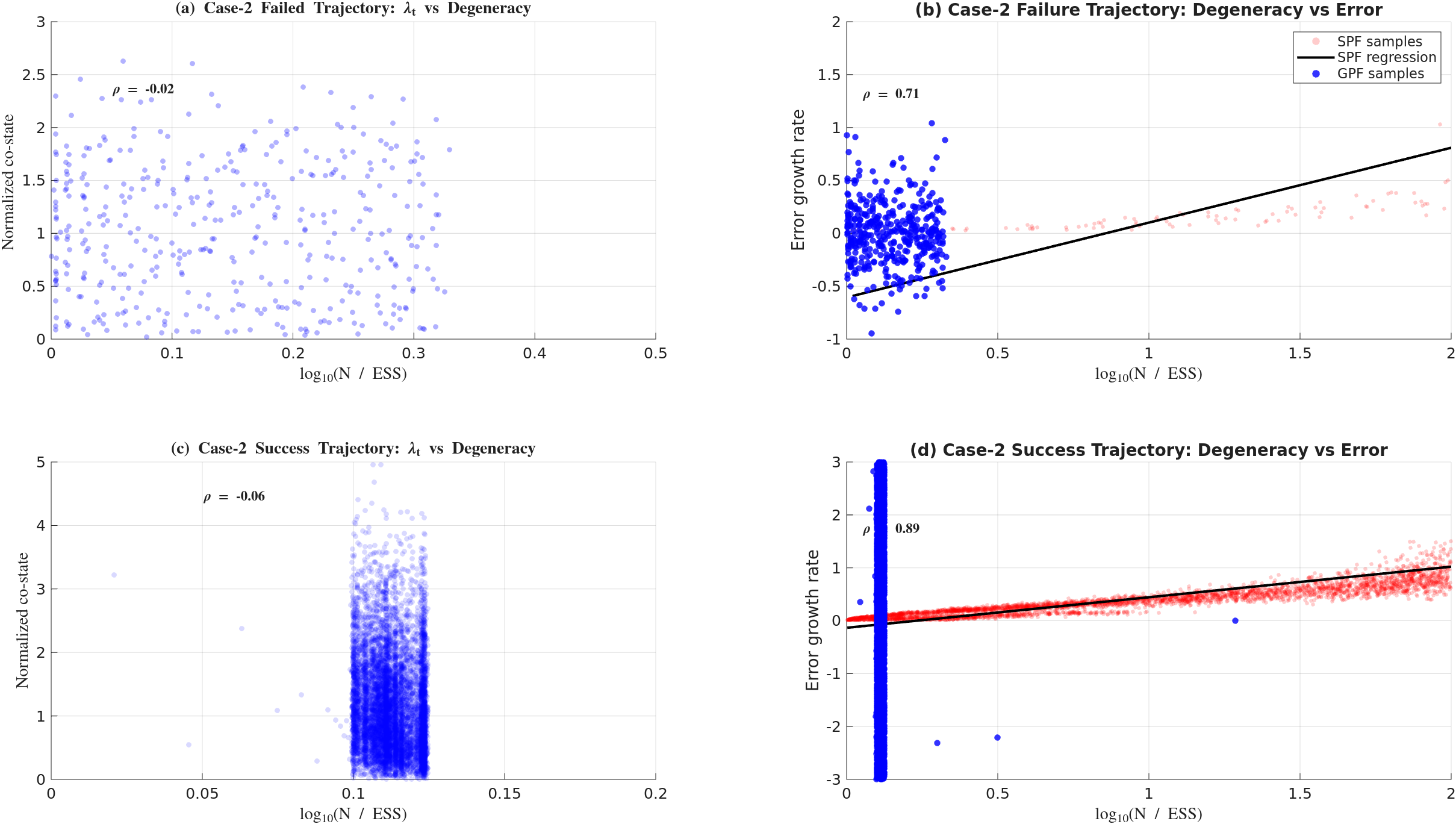}
\end{minipage}
\caption{
Relationship between co-state magnitude and particle degeneracy for Case-1 and Case-2. SPF exhibits strong coupling between degeneracy and error growth, whereas GPF remains in low-degeneracy regimes.
}
\label{fig:degeneracy_growth}
\end{figure}

\section{Application Framework and Results}
\label{sec:results}
This section evaluates the Geometric Projection Particle Filter (GPF) using high-fidelity lunar descent navigation data under structural model uncertainty, partial observability, and measurement noise.

The experiments assess the theoretical predictions developed in Sections~\ref{sec:GPD}--\ref{sec:gpf} by examining the relationship between geometric inconsistency, particle degeneracy, and estimation error. GPF is compared with the bootstrap Particle Filter (SPF), Extended Kalman Filter (EKF), Unscented Kalman Filter (UKF), and Ensemble Kalman Filter (EnKF), representing established sampling-based and Gaussian nonlinear filtering methods. The results demonstrate improved particle diversity, estimation accuracy, and robustness under structural model uncertainty.
\subsection{Experimental Setup}

The proposed Geometric Projection Particle Filter (GPF) is evaluated using high-fidelity lunar descent navigation data to assess estimation performance under structural model uncertainty and partial observability. The spacecraft state is

\[
x(t)=
\begin{bmatrix}
r(t)\\
v(t)
\end{bmatrix}
=
[x\;y\;z\;v_x\;v_y\;v_z]^\top,
\]

where $r(t)\in\mathbb{R}^3$ and $v(t)\in\mathbb{R}^3$ denote the position and velocity vectors, respectively. The nominal dynamics follow the two-body lunar gravitational model,

\[
\dot r=v,\qquad
\dot v=-\mu\frac{r}{\|r\|^3},
\]

with lunar gravitational parameter $\mu=4902.8~\mathrm{km}^3\mathrm{s}^{-2}$.

Structural model uncertainty is introduced by perturbing the assumed gravitational parameter,

\[
\dot v=
-\mu(1+\epsilon_{\mathrm{dyn}})
\frac{r}{\|r\|^3},
\qquad
\epsilon_{\mathrm{dyn}}=0.09,
\]

corresponding to a persistent $9\%$ dynamics mismatch between the assumed and true models.

GPF is compared with the bootstrap Particle Filter (SPF), Extended Kalman Filter (EKF), Unscented Kalman Filter (UKF), and Ensemble Kalman Filter (EnKF). All particle-based methods employ $N=500$ particles with systematic resampling whenever $\mathrm{ESS}<N/2$. Range–azimuth–elevation measurements are corrupted by zero-mean Gaussian noise with standard deviation $\sigma=0.03~\mathrm{km}$, and all filters are evaluated using identical initial conditions, measurement realizations, and process noise to ensure a fair comparison.

Two representative lunar descent scenarios are considered. Case~1 represents a mismatch-dominated trajectory in which structural model uncertainty leads to estimator degradation, whereas Case~2 represents a nominal descent under partial observability. Together, these scenarios evaluate estimation accuracy, particle diversity, and robustness across representative operating conditions.





\subsection{Experimental Validation Framework}
\label{sec:theory_results_link}

The theoretical developments of Sections~\ref{sec:GPD}--\ref{sec:gpf} predict that the principal benefit of geometry-consistent propagation is the suppression of proposal--posterior mismatch before Bayesian importance weighting. By projecting the nominal dynamics onto the measurement-consistent subspace, the proposed framework reduces the residual geometric inconsistency represented by the co-state $\lambda_t$. Consequently, particle-weight dispersion is reduced, particle diversity is preserved, and estimation accuracy is improved.

The resulting validation hypothesis is summarized by the following mechanism,

\begin{equation}
\text{Geometric Projection}
\;\longrightarrow\;
\|\lambda_t\|
\;\longrightarrow\;
\operatorname{Var}(\log w_t)
\;\longrightarrow\;
\mathrm{ESS}
\;\longrightarrow\;
\mathrm{RMSE},
\label{eq:validation_chain}
\end{equation}

where smaller geometric inconsistency leads to lower importance-weight variance, higher effective sample size, and reduced estimation error.

The following experimental results examine each stage of this mechanism. Estimation accuracy is first evaluated through trajectory and Monte Carlo analyses, followed by an assessment of particle diversity using the normalized effective sample size. The evolution of the geometric co-state is then analyzed to establish the relationship between projected model mismatch, particle degeneracy, and estimation performance under structural model uncertainty.
\subsection{Trajectory Estimation Accuracy}

Figure~\ref{fig:error_evolution} illustrates the temporal evolution of the position estimation error for all competing filters under full and partial observability. The objective is to evaluate the ability of each estimator to maintain trajectory accuracy in the presence of persistent structural model uncertainty.

For the full-observability scenario (Case~1), SPF exhibits rapid divergence after a short transient, with the estimation error exceeding $300~\mathrm{km}$. Although EKF, UKF, and EnKF remain numerically stable, all three accumulate increasing estimation bias as the model mismatch propagates through the prediction stage. In contrast, GPF maintains a bounded estimation error below approximately $5~\mathrm{km}$ throughout the descent, indicating that geometry-consistent propagation effectively limits the accumulation of prediction error.

The degradation becomes more pronounced under partial observability (Case~2). The reduction in measurement information accelerates the divergence of SPF and significantly increases the estimation error of the Gaussian filtering methods. Despite the reduced observability, GPF maintains stable trajectory tracking with only a modest increase in estimation error, demonstrating robustness to the combined effects of structural model uncertainty and limited measurement information.

These results establish that geometry-consistent proposal propagation maintains accurate state estimation under both nominal and degraded sensing conditions. The mechanisms responsible for this improved behavior are examined in the following subsections through analyses of particle diversity and geometric consistency.

\subsection{Monte Carlo Estimation Performance}
\label{sec:mc_performance}

To assess statistical robustness, all filters are evaluated over 100 Monte Carlo realizations using identical initial conditions, process noise, and measurement realizations. Table~\ref{tab:mc_results} summarizes the position RMSE (mean $\pm$ standard deviation) for both evaluation scenarios.

\begin{table}[t]
\centering
\caption{Monte Carlo estimation performance (RMSE in km, mean $\pm$ standard deviation).}
\label{tab:mc_results}
\begin{tabular}{lcc}
\hline
\textbf{Method} & \textbf{Full Observability} & \textbf{Partial Observability} \\
\hline
SPF   & $331.19 \pm 1.25$  & $250.40 \pm 3.60$ \\
GPF   & $\mathbf{4.52 \pm 0.00}$ & $\mathbf{5.28 \pm 0.22}$ \\
EKF   & $15.36 \pm 2.88$   & $111.22 \pm 43.70$ \\
UKF   & $15.31 \pm 0.00$   & $105.01 \pm 0.28$ \\
EnKF  & $12.11 \pm 0.45$   & $92.31 \pm 4.09$ \\
\hline
\end{tabular}
\end{table}

Across both evaluation scenarios, GPF consistently achieves the lowest mean RMSE and the smallest variability among all competing filters. Under full observability, the mean RMSE decreases from $331.19~\mathrm{km}$ (SPF) to $4.52~\mathrm{km}$, corresponding to an improvement of approximately two orders of magnitude. Under partial observability, GPF maintains comparable accuracy ($5.28~\mathrm{km}$), whereas the estimation error of SPF, EKF, UKF, and EnKF increases substantially.

The consistently small standard deviations indicate that the proposed geometry-consistent proposal yields stable estimation performance across independent Monte Carlo realizations rather than isolated favorable trajectories. This demonstrates that the observed improvement is both accurate and statistically repeatable. The paired Wilcoxon signed-rank test further confirms the statistical significance of the improvement ($p<0.001$) for both evaluation scenarios, indicating that the reduction in estimation error is unlikely to arise from Monte Carlo variability.
\subsection{Particle Degeneracy and Effective Sample Size}
\label{sec:ess}
Figure~\ref{fig:ess} evaluates particle diversity through the normalized effective sample size (ESS$/N$), which quantifies the proportion of particles contributing effectively to the posterior approximation. Since particle degeneracy is a primary limitation of the bootstrap particle filter, sustained ESS directly reflects the quality of the proposal distribution.

For the mismatch-dominated trajectory, SPF undergoes rapid weight collapse, with the normalized ESS decreasing below $0.1$ within approximately $200~\mathrm{s}$ and remaining close to zero thereafter. In contrast, GPF maintains ESS above $0.7$ throughout the descent, indicating that the proposed geometry-consistent propagation preserves particle diversity despite persistent structural model uncertainty. The corresponding time-averaged normalized ESS values are

\[
\frac{\mathrm{ESS}_{\mathrm{SPF}}}{N}\approx0.12,
\qquad
\frac{\mathrm{ESS}_{\mathrm{GPF}}}{N}\approx0.79.
\]

The sustained increase in ESS indicates that the projected proposal remains substantially closer to the posterior distribution than the bootstrap proposal. Consequently, importance weights remain more uniformly distributed across the particle ensemble, reducing the frequency of particle collapse and preserving sampling efficiency throughout the trajectory.

These observations provide the experimental mechanism underlying the RMSE improvements reported in Sections~\ref{sec:mc_performance} and~\ref{sec:results_discussion}: improved proposal quality leads to higher particle diversity, which in turn enables more accurate Bayesian state estimation under structural model uncertainty.
\subsection{Co-State, Degeneracy, and Error Growth Coupling}
\label{sec:degeneracy_growth}
Figure~\ref{fig:degeneracy_growth} validates the theoretical error-propagation mechanism proposed in Section~\ref{sec:theory_results_link}, namely that the geometric co-state governs particle degeneracy and the resulting estimation error. Subplots (a) and (c) relate the co-state magnitude, $\|\lambda_t\|$, to particle degeneracy. SPF exhibits a broad, highly dispersed distribution, indicating strong residual dynamics--measurement inconsistency and rapid particle collapse. In contrast, GPF confines $\|\lambda_t\|$ to a bounded region while maintaining high effective sample size, demonstrating that geometric projection suppresses proposal mismatch before Bayesian weighting.

Subplots (b) and (d) relate particle degeneracy to estimation error. For SPF, estimation error increases strongly with degeneracy ($\rho\approx0.8$--$0.9$), confirming that weight collapse directly amplifies filtering error. In contrast, GPF maintains low estimation error over the observed degeneracy range, indicating a substantially weaker dependence between particle collapse and estimation accuracy. These results validate the proposed error-propagation mechanism: reducing geometric inconsistency limits importance-weight degeneracy and consequently suppresses the propagation of structural model mismatch into the state estimate.
\subsection{Robustness to Structural Model Uncertainty}
\label{sec:robustness}

Figure~\ref{fig:model_mismatch} evaluates filter performance under progressively increasing structural model uncertainty, providing experimental validation of Proposition~\ref{prop:robustness}, which predicts that the filtering error is governed by the projected mismatch $(I-P_{\parallel})\Delta f$. As the model mismatch increases, SPF exhibits rapid error growth because propagation relies entirely on the nominal dynamics. EKF, UKF, and EnKF show similar degradation, with estimation accuracy deteriorating monotonically as prediction errors accumulate.

In contrast, GPF maintains stable performance across the entire mismatch range, with only a modest RMSE increase (approximately $10$--$15\%$). This behavior indicates that the geometry-consistent projection suppresses the mismatch component orthogonal to the measurement-consistent subspace before Bayesian importance weighting, thereby limiting the propagation of structural errors into the posterior estimate. These results validate the proposed robustness mechanism: attenuating the projected model mismatch preserves proposal consistency and maintains accurate state estimation under persistent structural uncertainty.

\subsection{Robustness to Measurement Noise}

Figure~\ref{fig:snr} evaluates estimation performance under progressively increasing measurement noise to assess the robustness of the proposed geometry-consistent framework. As the noise level increases, SPF, EKF, UKF, and EnKF exhibit progressively larger estimation errors owing to reduced measurement reliability. In contrast, GPF maintains consistently lower estimation error across the entire noise range, demonstrating stable performance under degraded sensing conditions.

The improved robustness follows directly from the geometry-consistent proposal. By enforcing compatibility with the local measurement geometry prior to Bayesian importance weighting, the projected proposal remains closer to the posterior distribution, reducing proposal--posterior mismatch and preserving sampling efficiency despite elevated measurement noise. These results demonstrate that geometry-consistent propagation improves robustness to measurement uncertainty while preserving the underlying Bayesian filtering formulation.
\subsection{Discussion of Results}
\label{sec:results_discussion}

The experimental results consistently validate the theoretical framework developed in Sections~\ref{sec:GPD}--\ref{sec:gpf}. Across all evaluation scenarios, geometry-consistent proposal propagation reduces the projected model mismatch, suppresses importance-weight dispersion, preserves effective sample size, and consequently improves estimation accuracy. The observed relationship,

\[
(I-P_{\parallel})\Delta f
\;\rightarrow\;
\|\lambda_t\|
\;\rightarrow\;
\operatorname{Var}(\log w_t)
\;\rightarrow\;
\mathrm{ESS}
\;\rightarrow\;
\mathrm{RMSE},
\]

is consistently supported by the experimental evidence.

Unlike conventional particle filtering, which addresses model mismatch only through importance weighting, GPF reduces the inconsistency during proposal propagation while preserving the Bayesian posterior through the subsequent importance-weight correction. Consequently, the proposed framework achieves robust nonlinear state estimation under structural model uncertainty without modifying the underlying Bayesian filtering formulation.
\section{Conclusion}

This paper presented the Geometric Projection Particle Filter (GPF) for nonlinear state estimation under structural model uncertainty and partial observability. By incorporating measurement geometry directly into particle propagation through projection onto the measurement-consistent subspace, the proposed approach reduces proposal--likelihood mismatch while preserving the Bayesian posterior via a rigorous change of measure formulation.

Theoretical results established existence and uniqueness of the projected dynamics, Bayesian posterior preservation, standard Monte Carlo convergence of the particle approximation, and robustness to structural model uncertainty by showing that the filtering error is governed by the component of model mismatch orthogonal to the measurement-consistent subspace. The geometric co-state was shown to provide an intrinsic measure of dynamics--measurement inconsistency, linking structural model mismatch with particle degeneracy and estimation error.

Numerical experiments on lunar descent navigation demonstrated consistently higher effective sample size, reduced particle degeneracy, and substantially improved estimation accuracy compared with conventional particle filtering under persistent model mismatch and partial observability. These results demonstrate that incorporating measurement geometry into particle propagation provides a principled, computationally efficient, and robust framework for nonlinear Bayesian filtering.

Future work will investigate adaptive projection strategies, high-dimensional filtering, and integration with autonomous guidance, navigation, and control architectures.
	\bibliographystyle{elsarticle-num}
	\bibliography{references_v4}	


\end{document}